\newtheorem{theorem}{Theorem}
\newtheorem{cor}[theorem]{Corollary}
\newtheorem{exam} {Example}
\newtheorem{lem}[theorem]{Lemma}
\newtheorem{prop}{Proposition}
\newtheorem{rem}{Remark}
\DeclareMathOperator{\divv}{div}
\begin{document}

\title{\bf Complete $\lambda$-submanifolds in Gauss spaces}
\author{\bf  Doan The Hieu\\
 Department of Mathematics\\
 College of Education, Hue University, Hue, Vietnam\\
 \\
 dthieu@hueuni.edu.vn}

\maketitle
\begin{abstract}

 In this paper, we study $\lambda$-submanifolds of arbitrary codimensions in  Gauss spaces. These submanifolds can be seen as natural generalizations of self-shrinker and $\lambda$-hypersurfaces. Using a divergence type theorem and some Simons' type identities, we prove some halfspace type theorems and  gap theorems for complete proper $\lambda$-submanifolds. These generalized  our as well as the others' results  for self-shrinker or $\lambda$-hypersurfaces to $\lambda$-submanifolds. 

 \end{abstract}

\noindent {\bf AMS Subject Classification (2020):}  {Primary 53C21; Secondary 35J60}\\
{\bf Keywords:} {$\lambda$-submanifolds, self shrinkers, halfspace type theorem, gap theorems}
\vskip 1cm

\section{Introduction}
A manifold with density is a Riemannian manifold $M$  endowed with a positive function (density) $e^{-f}$  used to weigh both volume and perimeter. The weighted volume and perimeter elements are defined as $e^{-f} dV$ and $e^{-f} dA,$ where $dV$ and $dA$ are the Riemannian volume and perimeter elements. We refer the reader to \cite{mo1}, \cite{mo2}, \cite {RCBM}  for more details bout manifolds with density.

Manifolds with
density appeared in mathematics long ago and is a special case of “mm spaces”, studied earlier by Gromov \cite{gr}.
   Following Gromov (\cite[p. 213]{gr}),  the natural generalization of the mean curvature of a hypersurface $\Sigma$ in a manifold $M$ with density $e^{-f}$ is defined as
\begin{equation}      H_{f}=H+\langle \nabla f, {\bf n}\rangle,\end{equation}
where $H$ is the classical mean curvature and {\bf n} is the normal vector field of $\Sigma.$
$H_f$ is called weighted mean curvature, or curvature with density, or $f$-curvature of $\Sigma$ by some mathematicians nowadays. A hypersurface $\Sigma$ with $H_f= 0$ everywhere is called  weighted minimal or $f$-minimal. If  $H_f=\lambda \ (\text{a constant}),$  then $\Sigma$ is called a $\lambda$-hypersurface.

 A typical example of  manifolds with density is Gauss space $\mathbb G^m,$ that is $\Bbb R^m$ with Gaussian probability density $(2\pi)^{-\frac m2}e^{-\frac{|X|^2}2},$ where $X$ is the position vector. Gauss space has many applications to probability and statistics. In Gauss space, $f$-minimal submanifolds  are nothing but self-shrinkers, self-similar solutions to the mean curvature ﬂow (MCF) that play
an important role in the study  singularities of the MCF. The study of self-shrinkers and $\lambda$-hypersurfaces attracts the attention of many mathematicians.  For more information about
self-shrinkers as well as singularities of the MCF, we refer the readers to   \cite{comi3}, \cite{hu1}, \cite{hu2}, \ldots, and references therein.
For some results concerning $\lambda$-hypersurfaces, see \cite{anmi}, \cite{chwe}, \cite{gu}, \cite{waxuzh},  \cite{wepe}.

Some halfspace type theorems for self-shrinker of codimension 1 or for $\lambda$-hypersurfaces were proved in \cite{piri}, \cite{huparo}, \cite{caes}, \cite{impiri}, \cite{vizh} and for self-shrinkers of arbitrary codimension   in \cite{hidu}.

In 2011, Le-Seum \cite{lese} prove a gap theorem for self-shrinkers of codimension 1. Soon after, in 2012, Cao-Li \cite{cali} generalized the result to arbitrary codimension case (see also \cite{dixi1}). They proved
\begin{theorem} [\cite{cali}, Theorem 1.1]
If $\Sigma^n \rightarrow \mathbb R^{n+p} (p \ge 1)$ is an $n$-dimensional complete self-shrinker without
boundary, with polynomial volume growth, and satisfies
$$
|A|^2 \le 1,
$$
then $\Sigma$ is one of the following:
\begin{enumerate}
\item  a hyperplane in $\mathbb R^{n+1},$
\item a round sphere $S^n(\sqrt n)$ in $\mathbb R^{n+1},$
\item a cylinder $S^k(\sqrt k)\times \mathbb R^{n-k}, 1 \le k \le n - 1,$ in $\mathbb R^{n+1}.$

\end{enumerate}
Here $|A|^2$ is the squared norm  of the second fundamental form of $\Sigma.$
\end{theorem}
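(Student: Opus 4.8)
The plan is to run a weighted Bochner--Simons argument for the second fundamental form $A$ and to exploit that the Gaussian density turns the relevant drift Laplacian of $|A|^2$ into a tractable expression once the self-shrinker equation is used. I normalize the self-shrinker equation as $\vec H = -X^{\perp}$ (that is, $H_f=0$ for $f=\tfrac12|X|^2$), and I work with the drift Laplacian $\mathcal L = \Delta - \langle X,\nabla\,\cdot\,\rangle$, which is self-adjoint with respect to the weighted measure $e^{-|X|^2/2}\,dV$. The hypothesis of polynomial volume growth is used precisely to legitimize the weighted integration by parts: via a family of cutoffs $\phi_R$ supported in extrinsic balls of radius $R$, one first shows $\int_\Sigma |\nabla A|^2\, e^{-|X|^2/2}\,dV<\infty$ and then obtains the global identity $\int_\Sigma \mathcal L\,|A|^2\,e^{-|X|^2/2}\,dV = 0$, the cutoff errors being absorbed by the polynomial growth against the Gaussian weight. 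This is the divergence-type theorem alluded to above.

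First I would establish the Simons-type identity. Differentiating the Gauss and Codazzi equations, commuting derivatives (the ambient space being flat), and feeding in $\vec H=-X^\perp$ to evaluate the mean-curvature Hessian and the drift term, one arrives at
\[
\tfrac12\,\mathcal L\,|A|^2 = |\nabla A|^2 + |A|^2 - Q(A),
\]
where $Q(A)=\sum_{\alpha,\beta}\bigl(\sum_{i,j}h^{\alpha}_{ij}h^{\beta}_{ij}\bigr)^{2}+\sum_{\alpha,\beta}\|[A^{\alpha},A^{\beta}]\|^{2}$ gathers the quartic self-interaction of the shape operators $A^\alpha=(h^\alpha_{ij})$ together with the normal-bundle curvature. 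The linear coefficient is normalized so that $S^n(\sqrt n)$, on which $|A|^2\equiv 1$, is critical: there $Q(A)=|A|^4=1$ and the right-hand side vanishes. In codimension one only the first piece of $Q$ survives, $Q(A)=|A|^4$, and the identity collapses to $\tfrac12\mathcal L|A|^2 = |\nabla A|^2 + |A|^2(1-|A|^2)$.

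The heart of the matter, and the step I expect to be the main obstacle, is the quartic term in higher codimension. Integrating the identity against the Gaussian weight gives
\[
\int_\Sigma |\nabla A|^2\, e^{-|X|^2/2}\,dV = \int_\Sigma \bigl(Q(A)-|A|^2\bigr)\, e^{-|X|^2/2}\,dV,
\]
so everything hinges on controlling $Q(A)$ by $|A|^4$ when $|A|^2\le 1$. The difficulty is that the naive Cauchy--Schwarz estimate only yields $Q(A)\le\tfrac32|A|^4$, which is attained by genuinely higher-codimension shape operators and would push the gap down to $\tfrac23$ rather than $1$. To recover the sharp threshold $|A|^2\le1$ I would exploit the structural trade-off inside $Q$: the Gram term $\sum_{\alpha,\beta}(\sum_{ij}h^\alpha_{ij}h^\beta_{ij})^2$ attains its maximum $|A|^4$ exactly when the $A^\alpha$ are mutually proportional, in which case all commutators $[A^\alpha,A^\beta]$ vanish, so the two constituents of $Q$ cannot simultaneously be large. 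Quantifying this (a refinement of the Chern--do Carmo--Kobayashi / Li--Li algebraic inequalities, together with a careful bookkeeping of the equality locus) is the delicate point that promotes the estimate to the value needed to force the integrand to vanish.

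Once $\int|\nabla A|^2 e^{-|X|^2/2}\,dV=0$ and the equality analysis are in hand, the conclusion follows: $\nabla A\equiv 0$, $|A|^2$ is constant with value in $\{0,1\}$, and in the equality regime the shape operators commute, so the normal bundle is flat. If $|A|\equiv 0$, then $\Sigma$ is totally geodesic and $\vec H=-X^\perp$ forces it to be a linear $n$-plane through the origin, hence a hyperplane. If $|A|^2\equiv 1$, then $A$ is parallel with flat normal bundle, which permits a reduction of codimension: $\Sigma$ lies inside an affine $\mathbb R^{n+1}$, whereupon the codimension-one classification of complete self-shrinkers with constant $|A|$ and parallel second fundamental form leaves only the round sphere $S^n(\sqrt n)$ and the cylinders $S^k(\sqrt k)\times\mathbb R^{n-k}$. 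Besides the quartic estimate, the second place demanding genuine higher-codimension care is this reduction-of-codimension step, which must be justified from parallelism and flatness of the normal connection.
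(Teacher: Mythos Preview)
Your approach via $\mathcal L|A|^2$ has a genuine gap in codimension $p\ge 2$: the ``refinement'' of the Li--Li inequality you are hoping for does not exist. The bound
\[
Q(A)=\sum_{\alpha,\beta}S_{\alpha\beta}^{2}+\sum_{\alpha\ne\beta}\|[A^{\alpha},A^{\beta}]\|^{2}\le \tfrac32\,|A|^{4}
\]
is \emph{sharp} as a pointwise algebraic inequality on tuples of symmetric matrices (equality is attained by Veronese-type configurations), so there is no purely algebraic upgrade to $Q(A)\le |A|^4$. Your heuristic that the Gram term and the commutator term ``cannot simultaneously be large'' is correct qualitatively, but quantitatively it only buys the improvement from the Chern--do Carmo--Kobayashi constant $2-\tfrac1p$ down to $\tfrac32$, not down to $1$. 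Consequently, integrating your Simons identity under the hypothesis $|A|^2\le 1$ yields nothing; the best gap this route produces is $|A|^2\le \tfrac23$, and indeed the paper itself records exactly this weaker conclusion later on (from \eqref{42}).

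The way the paper (following Cao--Li) obtains the sharp threshold $|A|^2\le 1$ is to abandon $\mathcal L|A|^2$ and work instead with $\mathcal L H^2$. For a self-shrinker Lemma~\ref{lem15} gives
\[
\Delta_f H^2 = 2|\nabla H|^2 + 2H^2 - 2\sum_{\alpha,\beta} H^\alpha H^\beta \langle A^\alpha,A^\beta\rangle,
\]
and a single Cauchy--Schwarz bounds the last term by $2H^2|A|^2$, so $|A|^2\le 1$ immediately yields $\Delta_f H^2\ge 2|\nabla H|^2\ge 0$. The weighted divergence theorem then forces $\nabla H\equiv 0$ and equality throughout the Cauchy--Schwarz chain, which gives $A^\alpha=0$ for $\alpha\ne n+p$; Yau's theorem reduces the codimension to one, and Lawson's theorem finishes. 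The crucial point is that the quartic obstacle you identified never appears, because the coupling $H^\alpha H^\beta\langle A^\alpha,A^\beta\rangle$ is quadratic in $A$, not quartic. So the fix is not a better estimate for $Q(A)$ but a different Simons-type quantity altogether.
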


For $\lambda$-hypersurfaces, with density $e^{-\frac{|X|^2}4}$, Guang \cite{gu} prove the following classification theorem.
\begin{theorem} [\cite{gu}, Theorem 1.3]
If $\Sigma^n \subset \mathbb R^{n+1}$ is a smooth complete embedded $\lambda$-hypersurface with polynomial volume growth and satisfies
$$
|A| \le\frac{\sqrt{\lambda^2 + 2} - |\lambda|}2,
$$
then $\Sigma$  is one of the following:
\begin{enumerate}
\item a hyperplane in $\mathbb R^{n+1},$
\item a round sphere $S^n,$
\item  a cylinder $S^k \times \mathbb R^{n-k}$  for $1 \le k \le n - 1.$

\end{enumerate} 
\end{theorem}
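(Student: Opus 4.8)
The plan is to follow the Cao--Li strategy for self-shrinkers, adapted to accommodate the $\lambda$-term, and to reduce the classification to hypersurfaces with parallel second fundamental form. I would work in $\mathbb{G}^{n+1}$ with $f=|X|^2/4$ and the drift Laplacian $\mathcal{L}=\Delta-\tfrac12\langle X,\nabla\,\cdot\,\rangle$, which is self-adjoint with respect to the weighted measure $e^{-f}\,dV$. Writing the $\lambda$-hypersurface equation as $H+\tfrac12\langle X,N\rangle=\lambda$ for the unit normal $N$, the first step is to differentiate this relation twice and combine it with the Codazzi equation and the classical Simons identity to obtain a Simons-type identity of the shape
$$\frac{1}{2}\mathcal{L}|A|^2=|\nabla A|^2+|A|^2\Bigl(\tfrac12-|A|^2\Bigr)-\lambda\, f_3,\qquad f_3=\sum_{i,j,k}h_{ij}h_{jk}h_{ki}.$$
The one point to verify carefully here is that the cubic term enters with coefficient exactly $\lambda$, since this is precisely what calibrates the gap constant in the hypothesis.

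Next I would estimate the cubic term pointwise. Diagonalizing $A$ in a principal frame gives $f_3=\sum_i\kappa_i^3$, so $|f_3|\le\sum_i|\kappa_i|^3\le\bigl(\sum_i\kappa_i^2\bigr)^{3/2}=|A|^3$, and therefore
$$\frac{1}{2}\mathcal{L}|A|^2\ge|\nabla A|^2+|A|^2\Bigl(\tfrac12-|A|^2-|\lambda|\,|A|\Bigr).$$
The value $\tfrac{\sqrt{\lambda^2+2}-|\lambda|}{2}$ is exactly the positive root of $t^2+|\lambda|t-\tfrac12=0$, so the hypothesis $|A|\le\tfrac{\sqrt{\lambda^2+2}-|\lambda|}{2}$ is equivalent to $\tfrac12-|A|^2-|\lambda|\,|A|\ge0$ everywhere. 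Hence $\mathcal{L}|A|^2\ge0$: the bounded function $|A|^2$ is $\mathcal{L}$-subharmonic.

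The rigidity then comes from integrating against the Gaussian weight. Since $\Sigma$ is complete, without boundary, and of polynomial volume growth, the weighted volume $\int_\Sigma e^{-f}\,dV$ is finite and the drift-Laplacian divergence theorem applies with no boundary contribution, giving $\int_\Sigma \mathcal{L}|A|^2\,e^{-f}\,dV=0$. Together with $\mathcal{L}|A|^2\ge0$ this forces $\mathcal{L}|A|^2\equiv0$, and hence both $|\nabla A|\equiv0$ and $|A|^2(\tfrac12-|A|^2-|\lambda|\,|A|)\equiv0$. Thus $\Sigma$ has parallel second fundamental form, and by Lawson's classification of complete hypersurfaces in Euclidean space with $\nabla A=0$ it must be a hyperplane, a round sphere, or a cylinder $S^k\times\mathbb{R}^{n-k}$ with $1\le k\le n-1$; substituting each model back into $H+\tfrac12\langle X,N\rangle=\lambda$ determines the admissible radii and confirms that each case occurs.

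The main obstacle is the integration-by-parts step, namely justifying $\int_\Sigma\mathcal{L}|A|^2\,e^{-f}\,dV=0$ rigorously on a noncompact $\Sigma$. The honest route is a cutoff argument: for a logarithmic cutoff $\phi_R$ one has $\int\phi_R^2\,\mathcal{L}|A|^2\,e^{-f}=-2\int\phi_R\langle\nabla\phi_R,\nabla|A|^2\rangle\,e^{-f}$, and the right-hand side is controlled by Cauchy--Schwarz together with the Kato inequality $|\nabla|A|^2|\le2|A|\,|\nabla A|$ and the self-bound $|\nabla A|^2\le\tfrac12\mathcal{L}|A|^2$ coming from the Simons identity. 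The polynomial-volume-growth hypothesis is exactly what renders these weighted integrals finite and makes the cutoff error vanish as $R\to\infty$; this is where the divergence-type theorem announced in the abstract does the real work, and it is the step demanding the most care. By contrast, the Simons identity and the algebraic estimate on $f_3$ are routine, and the final classification is a citation to Lawson plus a short computation to match the radii.
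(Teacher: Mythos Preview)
Your proposal is correct and matches the paper's own treatment in Section~4: derive the Simons-type identity for the drift Laplacian of $|A|^2$, bound the cubic term by $|\lambda|\,|A|^3$ so that the hypothesis makes the right-hand side nonnegative, integrate against the Gaussian weight to force $\nabla A\equiv 0$, and then cite Lawson's classification. The only differences are that the paper invokes its divergence-type Theorem~\ref{basis} for the integration step rather than your cutoff argument, and that the paper then pushes one step further than you (or Guang) do, using the companion equation $|A|^2\bigl(\tfrac12-|A|^2-|\lambda|\,|A|\bigr)\equiv 0$ to sharpen the list to just the hyperplane and the specific cylinder $S^1(r)\times\mathbb R^{n-1}$.
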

And Wei-Peng \cite{wepe} proved
\begin{theorem} [\cite {wepe}, Theorem 1.2]
If $X: \Sigma \rightarrow \mathbb R^{n+1}$  is an $n$-dimensional complete $\lambda$-hypersurface with
polynomial area growth and satisfies $|A|^2$  bounded and
$$
H(H -\lambda)|A|^2\le  H^2, $$
then $\Sigma$ is one of the following:
\begin{enumerate}
\item a hyperplane $\mathbb R^n,$
\item a round sphere $S^n(r),$
\item  a cylinder $S^k(r) \times \mathbb R^{n-k}, \ 1\le  k \le  n - 1.$
\end{enumerate} 
Here $H$ is the mean curvature of $\Sigma.$
\end{theorem}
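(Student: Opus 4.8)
The plan is to reduce everything to a single Simons-type identity for the mean curvature and then squeeze it against the pinching hypothesis. Write $f=\tfrac12|X|^2$ (so the Gauss density is $e^{-f}$ up to a constant) and let $\mathcal{L}=\Delta-\langle X,\nabla\,\cdot\,\rangle$ be the associated drift Laplacian, which is self-adjoint with respect to $e^{-f}\,dA$. With the convention $H=\operatorname{tr}A$ and unit normal $N$, the $\lambda$-hypersurface equation reads $H+\langle X,N\rangle=\lambda$, i.e.\ $\langle X,N\rangle=\lambda-H$. First I would differentiate this relation, using $\nabla_i\langle X,N\rangle=-h_{ij}\langle X,e_j\rangle$ to get $\nabla_iH=h_{ij}X_j^{\top}$, together with $\nabla_kX_j^{\top}=\delta_{kj}+\langle X,N\rangle h_{kj}$. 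Feeding these into the Euclidean Simons identity $\Delta h_{ij}=\nabla_i\nabla_jH+Hh_{ik}h_{kj}-|A|^2h_{ij}$ and using Codazzi to cancel the resulting first-order term against the drift term, one obtains
\[
\mathcal{L}h_{ij}=h_{ij}+\lambda\,h_{ik}h_{kj}-|A|^2h_{ij},
\qquad\text{hence}\qquad
\mathcal{L}H=H+|A|^2(\lambda-H).
\]
The crucial algebraic observation is that this identity is \emph{exactly} matched to the hypothesis: multiplying by $H$ gives $H\,\mathcal{L}H=H^2-H(H-\lambda)|A|^2$, so
\[
\mathcal{L}(H^2)=2H\,\mathcal{L}H+2|\nabla H|^2=2\big[H^2-H(H-\lambda)|A|^2\big]+2|\nabla H|^2 .
\]

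The analytic heart is to integrate this against $e^{-f}\,dA$. Because $\Sigma$ has polynomial area growth and $|A|^2$ is bounded, we have $|H|\le\sqrt n\,|A|\le C$ and $|\nabla H|=|h_{ij}X_j^{\top}|\le C|X|$, so both $H^2$ and $|\nabla(H^2)|$ are integrable against the Gaussian weight; a cutoff argument (the divergence-type theorem quoted in the introduction) then shows $\int_\Sigma \mathcal{L}(H^2)\,e^{-f}\,dA=0$. By the hypothesis $H(H-\lambda)|A|^2\le H^2$ the first bracket above is pointwise nonnegative, and $|\nabla H|^2\ge0$; a sum of two nonnegative integrands with vanishing weighted integral must vanish identically. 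I would conclude $\nabla H\equiv0$, so $H$ is constant, and simultaneously that the equality $H(H-\lambda)|A|^2=H^2$ holds everywhere.

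It remains to classify. Constant $H$ forces the support function $\langle X,N\rangle=\lambda-H$ to be constant; differentiating once more gives $A\,X^{\top}=0$ and $\Hess\big(\tfrac12|X|^2\big)=g+(\lambda-H)A$. If $H\equiv0$ then $\mathcal{L}H=\lambda|A|^2=0$, so either $\lambda\ne0$ and $A\equiv0$ (a hyperplane), or $\lambda=0$ and $\langle X,N\rangle\equiv0$, whence $\Sigma$ is a smooth complete cone through the origin, again a hyperplane. If $H\ne0$, the equality case yields $|A|^2=H/(H-\lambda)=\const$, and the rigid structure $\Hess(\tfrac12|X|^2)=g+(\lambda-H)A$ with $A\,X^{\top}=0$ forces $\Sigma$ to be isoparametric: when $A$ is invertible one has $X^{\top}\equiv0$, so $\Sigma\subset\{|X|=|\lambda-H|\}$ is a round sphere $S^n(r)$, while a nontrivial $\ker A$ produces a flat factor along $X^{\top}$ and $\Sigma$ splits as a cylinder $S^k(r)\times\mathbb R^{n-k}$. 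By the Levi-Civita--Segre classification of isoparametric hypersurfaces of Euclidean space these are the only possibilities, and the $\lambda$-equation pins the radius $r$.

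The main obstacle is this final classification step: the integral identity alone only delivers $\nabla H\equiv0$, and upgrading "$H$ constant" to the full list genuinely requires the splitting/isoparametric rigidity, since the cubic term $\operatorname{tr}(A^3)$ appearing in $\mathcal{L}|A|^2$ is not controlled by $H$ and $|A|^2$ alone, so the analogous integration of $\mathcal{L}|A|^2$ does not close. A secondary technical point is the rigorous justification that the boundary terms in the divergence theorem vanish under only polynomial area growth; this is precisely where the Gaussian weight is essential, and it is exactly the role of the divergence-type theorem advertised in the abstract.
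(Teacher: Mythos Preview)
Your analytic core is exactly the paper's: compute $\mathcal L(H^2)=2|\nabla H|^2+2\bigl[H^2-H(H-\lambda)|A|^2\bigr]$, integrate against the Gaussian weight using the divergence-type theorem (boundedness of $|A|$ gives the needed polynomial control of $\Delta H^2$), and conclude $\nabla H\equiv0$ together with pointwise equality $H^2=H(H-\lambda)|A|^2$. Up to here your argument and the paper's (and Wei--Peng's) coincide.

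The gap is in your classification step. From $H=\const$ you correctly extract $\langle X,N\rangle=\lambda-H=\const$, $AX^\top=0$, $\Hess\!\bigl(\tfrac12|X|^2\bigr)=g+(\lambda-H)A$, and $|A|^2=\const$. Your invertible-$A$ case is fine. But the assertion ``a nontrivial $\ker A$ produces a flat factor along $X^\top$ and $\Sigma$ splits'' is not justified: knowing only that $H$ and $|A|^2$ are constant does \emph{not} make $\Sigma$ isoparametric for $n\ge3$ (two symmetric functions of the eigenvalues do not determine all of them), nor does it force $\ker A$ to be a parallel distribution. The Levi--Civita--Segre classification applies only after you know the principal curvatures are constant, which you have not established.

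The point you dismiss is precisely the one that closes the argument. From $H=\const$ one has $H_{,ij}=0$; your own formula $H_{,ij}=h_{ij}-(H-\lambda)h_{ik}h_{kj}+\langle X,e_k\rangle h_{ijk}$ then, after contracting with $h_{ij}$ and using $|A|^2=\const$, yields $(H-\lambda)\operatorname{tr}(A^3)=|A|^2$, i.e.\ $\operatorname{tr}(A^3)=H/(H-\lambda)^2$. Feeding this into the (undrifted) Simons identity gives $\langle\Delta A,A\rangle=H\operatorname{tr}(A^3)-|A|^4=0$, hence $0=\Delta|A|^2=2|\nabla A|^2$, so $\nabla A\equiv0$. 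Now Lawson's theorem delivers $S^k(r)\times\mathbb R^{n-k}$. This is exactly how the paper (in its codimension-$p$ generalization, specialized to $p=1$) and Wei--Peng finish; your remark that ``the analogous integration of $\mathcal L|A|^2$ does not close because $\operatorname{tr}(A^3)$ is uncontrolled'' overlooks that $\nabla^2H=0$ pins $\operatorname{tr}(A^3)$ exactly.
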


 In this paper, we generalize some  results (for self shrinkers or $\lambda$-hypersurfaces) including all theorems mentioned above and the others to $\lambda$-submanifolds.
We compute some Simon's type inequalities for $\lambda$-submanifolds and from there set up the conditions to infer the desired results. The computations in the higher codimensions case are, of course,  more complicated than the codimension 1 case. 

Different from the proofs of previous similar results (for self shrinker and $\lambda$-hypersurfaces), we use a divergence type theorem instead of integral estimates. That makes the proofs  simpler and shorter. 
 
\section{Preliminaries}
 \subsection{Definitions}
Let $\Sigma^n$ be a  submanifold in a  manifold $M^{n+p}$ endowed with a density $e^{-f}.$ Naturally, we can define the $f$-mean curvature vector of $\Sigma$ as (see \cite{impiri})
\begin{equation}\label{2}
{\bf H_f}={\bf H}+(\nabla f)^\perp
 \end{equation}
where ${\bf H}$ is the mean curvature vector of $\Sigma.$  If ${\bf H_f}\equiv 0,$ then $\Sigma$ is called a weighted or an $f$-minimal submanifold. If   $|{\bf H_f}|=\lambda,$ a constant, then $\Sigma$ is called a $\lambda$-submanifold. 

Weighted laplacian of a function $u:\Sigma\rightarrow \mathbb R,$ denoted by $\Delta_f(u)$ is defined as
$$
\Delta_f(u)=e^f\divv_\Sigma (e^{-f} \nabla_\Sigma(u).
$$
A direct computation shows that 
\begin{equation}\label{3}
\Delta_f(u)=\Delta_\Sigma u-\langle \nabla_\Sigma f, \nabla_\Sigma u\rangle.
\end{equation}

Since the density in Gauss spaces is
$(2\pi)^{-\frac m2}e^{-\frac{|X|^2}2},$ it is easy to see that
\begin{enumerate}
\item the equation for self-shrinkers is
$$
{\bf H}=-X^\perp;
$$
\item the $f$-mean curvature vector (\ref{2}) becomes
$$
{\bf H_f}={\bf H}+X^\perp;
$$
\item the equation for $\lambda$-submanifolds is
$$|{\bf H}_f|=|{\bf H}+X^\perp|=\lambda,$$
where  $\lambda\ge 0$ is a constant; 
\item and weighted laplacian (\ref{3}) becomes
$$
\Delta_f(u)=\Delta_\Sigma u-\langle X, \nabla_\Sigma u\rangle.
$$
\end{enumerate}
\begin{rem}
\begin{enumerate}
\item  $0$-submanifolds are self-shrinkers of codimension $p$. 
\item In Gauss spaces, the weighted Laplacian, $\Delta_f,$ is nothing but ${\cal L}$-operator introduced first by Colding and Minicozzi in \cite{comi3}. 
\item In the definition of a $\lambda$-submanifold, $\lambda\ge 0$ while in the definition of $\lambda$-hypersurface, $\lambda$ can be negative. But we can see that  a  $\lambda$-hypersurface is a $|\lambda|$-submanifold of codimension 1.
\end{enumerate}
\end{rem}

\subsection{Examples}
\begin{exam} [$n$-planes] \label{pla}
Let $\Sigma$ be an $n$-plane in $\mathbb G^{n+p}$ and $H$ be the orthogonal projection of the origin $O$ onto $\Sigma.$ The weighted mean curvature vector of $\Sigma$ is
\begin{align*}
{\bf H}_f&={\bf H}+X^\perp=X^\perp\\
&=\overrightarrow{OH}.
\end{align*}
Thus, $\Sigma$ is a $\lambda$-submanifold of codimension $p,$ where $\lambda=d(O, \Sigma).$ If $O\in\Sigma,$ then $\Sigma$ is a self-shrinker.
\end{exam}

\begin{exam} [Spheres with center $O$] \label{sph}
Let $S^{n}(r)$ be an $n$-sphere with center $O$ and radius $r$ in $\mathbb G^{n+p}.$  The mean curvature vector of $S^{n}(r)$ is
$${\bf H}=-\frac {n}{r^2}X.$$
Therefore,
\begin{align*}
{\bf H}_f&={\bf H}+X^\perp=-\frac {n}{r^2}X+X\\
&=\frac{r^2-n}{r^2}X.
\end{align*}
Thus, $S^n(r)$ is a $\lambda$- submanifold, where $\lambda=|\frac{r^2-n}{r}|$ and $r=\frac{\pm\lambda+\sqrt{\lambda^2+4n}}2.$ 
\end{exam}

\begin{exam} [Spheres with center $I\ne O$] \label{sph1}
Let $S^{n}(I, r)$ be an $n$-sphere center $I$ and radius $r$ in  $\mathbb G^{n+p}.$ Suppose that $\overrightarrow{OI}$ is perpendicular to the $(n+1)$-plane $P$ containing the sphere and   $d(O,I)=h.$ 
 
 The mean curvature vector of $S^{n}(I, r)$ is
$${\bf H}=-\frac {n}{r^2}\overrightarrow{IX}.$$
Therefore,
\begin{align*}
{\bf H}_f&={\bf H}+{X^\perp}={\bf H}+{X}\\
&=-\frac n{r}e_1+({re_1+he_2})\\
&=\left(-\frac n{r}+r \right)e_1+ h e_2,
\end{align*}
where $e_1=\frac{\overrightarrow{IX}}{|\overrightarrow{IX}|}.$  $e_2=\frac{\overrightarrow{OI}}{|\overrightarrow{OI}|}.$ 
Because 
\begin{align*}
{\bf H}_f^2=\left(-\frac n{r}+r\right)^2+h^2:=\lambda^2,\\
\end{align*}
$S^{n}(I, r)$ is a $\lambda$-surface, $\lambda=\sqrt{ \left(-\frac n{r}+r\right)^2+h^2}\ge h.$
\end{exam}
\begin{rem}
\begin{enumerate}
\item $h\le \lambda,$ means  the bigger $h$ is, the bigger $\lambda$ is.
\item It is not hard to show that if  $\overrightarrow{OI}$ is not perpendicular to the plane $P,$ $|{\bf H_f}|$ is not a constant, i.e. $S^n(I,r)$ is not a $\lambda$-submanifold.
\end{enumerate}

\end{rem}

\begin{exam}  [Cylinders] \label{cyl}

Let $S^{k}(I, r)$ be a $k$-sphere, $k<n,$ center $I$ and radius $r$ in  $\mathbb G^{n+p}$ and $P$ be the linear $(n-k)$-subspace perpendicular to the $(k+1)$-plane  containing $S^k.$ Consider the cylinder
${\cal C}=S^{k}(I,r)\times P$ and suppose that $I\in P$ (we can assume that $I\equiv O).$ 
In this case, 
$$|{\bf H}|=\frac {k}{r},\ \ \ |X^\perp|=r,$$
${\bf H}$ and $X^\perp$ are in opposite direction. Therefore
${\cal C}$ is a $\lambda$-submanifold, where $\lambda=|r-\frac kr|$ and $r= \frac{\pm\lambda+\sqrt{\lambda^2+4k}}2.$
\end{exam}
\begin{rem}
\begin{enumerate}
\item When $r=\sqrt k,$ $\Sigma$ is a self-shrinker. 
\item We can check that if  $P$ does not contain $I,$ $S^n(I,r)$ is not a $\lambda$-submanifold.
\end{enumerate}
\end{rem}

\begin{exam} [CMC submanifolds on spheres]  \label{min}
Let $\Sigma^n$ be a complete submanifold immersed in $S^{n+p-1}(r) \subset\mathbb G^{n+p}.$
We choose a local orthonormal frame $\{e_1, e_2, \ldots, e_{n+p}\}$ in $\mathbb G^{n+p}$ such that $\{e_1, e_2, \ldots, e_n\}$ are tangent to $\Sigma,$ $\{e_{n+1}, e_{n+2},$ $ \ldots, e_{n+p-1}\}$ are in the normal bundle of $\Sigma$  in $S^{n+p-1}(r),$ and $e_{n+p}$ is outward normal to $S^{n+p-1}(r).$
Levi-Civita connections on $S^{n+p-1}(r)$  (on $\mathbb G^{n+p}$) is denoted by  ${\nabla}$  (by $\overline{\nabla})$ and the mean curvature vector of $\Sigma$ in $S^{n+p-1}(r)$  (in $\mathbb G^{n+p}$) is denoted by  ${\bf H}$   (by $\overline{\bf H}).$ We have
\begin{align*}
\overline{\bf H}&=\sum^n_{i=1}\sum^{n+p-1}_{j=n+1}\langle\overline{\nabla}_{e_i}e_i , e_j\rangle e_j+\sum^n_{i=1}\langle\overline{\nabla}_{e_i}e_i , e_{n+p}\rangle e_{n+p}\\
&=\sum^n_{i=1}\sum^{n+p-1}_{j=n+1}\langle{\nabla}_{e_i}e_i , e_j\rangle e_j +\sum^n_{i=1}\langle\overline{\nabla}_{e_i}e_i , e_{n+p}\rangle e_{n+p}\\
&={\bf H}+\langle\sum^n_{i=1}\overline{\nabla}_{e_i}e_i , e_{n+p}\rangle e_{n+p}.
\end{align*} 
Since $\overline{\nabla}_vX = v$ for every vector $v,$  and $e_{n+p}=\frac  1rX=\frac  1rX^\perp,$
$$\langle\overline{\nabla}_{e_i}e_i, e_{n+p}\rangle =-\langle e_i ,\overline{\nabla}_{e_i}\frac 1{r}X\rangle=-\langle e_i,\frac 1{r}e_i\rangle=-\frac 1{r}, \ \ i=1,2,\ldots, n.$$
 Therefore,
$$
\overline{\bf H}={\bf H}-\frac {n}{r}e_{n+p}={\bf H}-\frac {n}{r}\frac{X}{r}={\bf H}-\frac {n}{r^2}X,
$$
i.e.
$$
\overline{\bf H_f}=\overline{\bf H}+X^\perp={\bf H}+\left[\frac {r^2- n}{r}\right]e_{n+p}.
$$
Thus,
\begin{enumerate}
\item $\Sigma$ is a $\lambda$-submanifold if and only if it is a CMC submanifold of $S^{n+p-1}(r)$.
\item \label{11} If $r=\frac{\pm\lambda+\sqrt{\lambda^2+4n}}2,$ $\lambda$-submanifolds are minimal submanifolds of $S^{n+p-1}(r)$ and vise versa.
\end{enumerate}
\end{exam}
\begin{exam} [Product of $\lambda$-submanifolds] \label{pro}
Let $\Sigma_1^{n_1}$ and $\Sigma_2^{n_2}$ be submanifolds  in $\mathbb G^{N_1}$ and $\mathbb G^{N_2},$ respectively. Consider the product  $\Sigma_1\times\Sigma_2\subset \mathbb G^{N_1+N_2}.$ We can see that
$${\bf H}={\bf H}_1+{\bf H}_2,$$
and
$${\bf H}_f={\bf H}_{1,f}+{\bf H}_{2,f};$$
where ${\bf H},  {\bf H}_1, {\bf H}_2$ are mean curvature vectors and
${\bf H}_f,  {\bf H}_{1,f}, {\bf H}_{2,f}$ are weighted mean curvature vectors of 
$\Sigma_1\times\Sigma_2, \Sigma_1$ and $\Sigma_2,$ respectively.
It follows that if $\Sigma_1$ is a $\lambda_1$-submanifold and $\Sigma_2$ is a $\lambda_2$-submanifold, then  $\Sigma_1\times\Sigma_2\subset \mathbb R^{N_1+N_2}$ is a $(\lambda_1+\lambda_2)$-submanifold.

Typical examples of products of $\lambda$-submanifolds are cylinders (see Example \ref{cyl}), Clifford tori in higher dimension or more generally, products of spheres. 
\end{exam}

\subsection{Properness vs. polynomial volume growth}

The volume growth is an important property for a complete non-compact manifold. 
An $n$-dimensional submanifold in $\mathbb R^{n+p}$ has Euclidean  volume growth if there exist a constant $C$ such that for all
$r \ge 1,$
$$
{\rm Vol}(B(r)) \cap \Sigma) = \int_{B(r)\cap\Sigma} d\mu \le Cr^n.
$$
It is said to have polynomial volume growth if there exist constants $C$ and $d$ such that for all
$r \ge 1,$
$$
{\rm Vol}(B(r)) \cap \Sigma) = \int_{B(r)\cap\Sigma} d\mu \le  Cr^d,
$$
 where $B(r)\subset\mathbb R^{n+p}$ is the  ball with radius $r$ and centered at the origin.
For the case of codimension 1, Cheng-Zhou (\cite{chzh}, Theorem 1.3) proved that 
 a complete non-compact properly immersed self-shrinker $\Sigma^n$
in $\mathbb R^{n+1}$ is proper if and only if it has Euclidean volume growth (and therefore polynomial)  at most. The same result holds for the case of $\lambda$-hypersurfaces (\cite{chwe}, Theorems 5.1, 5.2) and for self-shrinkers of arbitrary codimension (\cite{dixi}, Theorem 1.1). 

Halldorsson (\cite{ha}, Theorem 5.1) has proved that
there exist complete self-shrinker curves  $\Gamma$ in $\mathbb R^2,$ which is contained in an annulus around
the origin and whose image is dense in the annulus.  Since such a complete self-shrinker curve  $\Gamma$ is not proper, $\Gamma\times \mathbb R^{n-1}$ is a complete self-shrinker in $\mathbb R^{n+1},$ which does not have
polynomial volume growth.

We observe that Cheng-Wei's proof of Theorem 5.1 in \cite{chwe} also applies for  $\lambda$-submanifolds with a little change.  For the sake of completeness, we include the proof here.
\begin{prop}

Let $\Sigma^n\subset  \mathbb G^{n+p}$ be a complete and non-compact properly immersed $\lambda$-submanifolds. Then, there is a positive constant $C$ such that for $r \ge 1,$
$$
{\rm Area}(B_r(0)\cap X(M)) =\int_{B_r(0)\cap X(M)}dA \le  Cr^{n+\frac{\lambda^2}2 -2\beta-{\frac{\inf H ^2}2}},
$$
where $\beta = \frac 14 \inf|{\bf H_f}-{\bf H}|^2.$
\end{prop}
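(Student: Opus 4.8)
The plan is to reduce the statement to a single pointwise inequality for the Laplacian of the extrinsic distance and then run the monotonicity/co-area argument of Cheng--Wei; the only genuinely new ingredient is the codimension-$p$ computation in the first step.

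First I would compute $\tfrac12\Delta_\Sigma|X|^2$. Since $\Delta_\Sigma X={\bf H}$ and $|\nabla_\Sigma X|^2=n$, one has $\tfrac12\Delta_\Sigma|X|^2=n+\langle{\bf H},X\rangle=n+\langle{\bf H},X^\perp\rangle$. The $\lambda$-submanifold equation enters through $\lambda^2=|{\bf H}_f|^2=|{\bf H}+X^\perp|^2=|{\bf H}|^2+2\langle{\bf H},X^\perp\rangle+|X^\perp|^2$, which I would solve for $\langle{\bf H},X^\perp\rangle=\tfrac12(\lambda^2-|{\bf H}|^2-|X^\perp|^2)$; substituting gives the exact identity $\tfrac12\Delta_\Sigma|X|^2=n+\tfrac{\lambda^2}2-\tfrac12|{\bf H}|^2-\tfrac12|X^\perp|^2$. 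Bounding $|{\bf H}|^2\ge\inf H^2$ yields
\[
\tfrac12\Delta_\Sigma|X|^2\le c_1-\tfrac12|X^\perp|^2,\qquad c_1:=n+\tfrac{\lambda^2}2-\tfrac{\inf H^2}2 .
\]
It is essential here to keep the decay term $-\tfrac12|X^\perp|^2$ rather than discard it: for self-shrinkers it is the full $-|X^\perp|^2$, and without it even the minimal case (where $\tfrac12\Delta_\Sigma|X|^2=n$) does not have controlled volume growth. Since ${\bf H}_f-{\bf H}=X^\perp$, the constant $\beta=\tfrac14\inf|{\bf H}_f-{\bf H}|^2$ equals $\tfrac14\inf|X^\perp|^2$, so $c:=c_1-2\beta$ is exactly the target exponent (one checks on a distance-$\lambda$ hyperplane that $\inf H^2=0$, $\inf|X^\perp|^2=\lambda^2$ give $c=n$, which is sharp).

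Next I would use properness. Because $X$ is proper, each extrinsic ball $D(r)=\{|X|<r\}$ has compact closure, so $\mathrm{Area}(D(1))<\infty$ and the divergence theorem applies on $D(r)$ with no contribution from infinity. Integrating $\divv_\Sigma X^T=\tfrac12\Delta_\Sigma|X|^2$ over $D(r)$ gives, with $V(r)=\mathrm{Area}(D(r))$,
\[
\int_{\partial D(r)}|X^T|\,d\sigma=\int_{D(r)}\tfrac12\Delta_\Sigma|X|^2\,dA\le c_1V(r)-\tfrac12\int_{D(r)}|X^\perp|^2\,dA .
\]
Combining this with the co-area identity $V'(r)=r\int_{\partial D(r)}|X^T|^{-1}\,d\sigma$ and the first-variation formulas for $\int_{D(r)}|X^T|^2\,dA$ and $\int_{D(r)}|X^\perp|^2\,dA$, one assembles a monotonicity formula for the density ratio $r^{-c}V(r)$ in which the decay term $-\tfrac12|X^\perp|^2$, together with $\inf|X^\perp|^2=4\beta$, supplies exactly the drop from $c_1$ to $c=c_1-2\beta$; integrating the resulting Gronwall-type inequality from $1$ to $r$ yields $V(r)\le Cr^{\,c}$ with $C$ depending only on $\mathrm{Area}(D(1))$.

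The main obstacle is this monotonicity step. On $\partial D(r)$ one controls $|X^T|$ rather than $|X|$, so the boundary flux $\int_{\partial D(r)}|X^T|\,d\sigma$ and the derivative $V'(r)=r\int_{\partial D(r)}|X^T|^{-1}\,d\sigma$ involve $|X^T|$ with opposite powers and do not pair up directly; this awkwardness is already present in the codimension-one self-shrinker and $\lambda$-hypersurface arguments. I expect to handle it exactly as Cheng--Wei do: test the divergence identity against the radial weight $|X|^{-c}$ and rewrite $|X^T|^2=|X|^2-|X^\perp|^2$, so that the decay term acquires a favorable sign and makes $r^{-c}V(r)$ essentially non-increasing for $r\ge1$, with properness guaranteeing that the only boundary terms are those at finite radius. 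The constants $\lambda$, $\beta$, and $\inf H^2$ play no role beyond fixing the exponent $c$.
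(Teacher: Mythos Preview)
Your pointwise computation is exactly the one the paper uses, and your identification of the exponent $c=n+\tfrac{\lambda^2}{2}-2\beta-\tfrac{\inf H^2}{2}$ is correct. The difference is purely in packaging. You propose to re-run the Cheng--Wei monotonicity/co-area argument by hand, testing the divergence identity against a radial weight and using $|X^T|^2=|X|^2-|X^\perp|^2$ to close the Gronwall inequality. The paper instead observes that the function $h=\tfrac{|X|^2}{4}-\beta$ satisfies the two abstract hypotheses of Theorem~2.1 in Cheng--Zhou \cite{chzh}: namely $h-|\nabla_\Sigma h|^2=\tfrac{1}{4}|X^\perp|^2-\beta\ge 0$ (this is where $\beta=\tfrac14\inf|X^\perp|^2$ is spent) and $\Delta_h h+h=\tfrac{n}{2}+\tfrac{\lambda^2}{4}-\tfrac{H^2}{4}-\beta\le k$; the volume bound then drops out as a black box. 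Your route rederives what that theorem encapsulates, so it is longer and your monotonicity step is only sketched (``I expect to handle it exactly as Cheng--Wei do''), but the strategy is sound and the substantive codimension-$p$ input---the identity $\tfrac12\Delta_\Sigma|X|^2=n+\tfrac{\lambda^2}{2}-\tfrac12|{\bf H}|^2-\tfrac12|X^\perp|^2$---is identical in both.
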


\begin{proof}
  Let $h = \frac{|X|^2}4-\beta$ and  $k=\frac n2+\frac{\lambda^2}4-\beta-\frac{\inf{H}^2}4.$ We will check that $h$ satisfies the conditions ($h- |\nabla_\Sigma h|^2\ge 0$ and $\Delta_hh+h\le k$) of Theorem 2.1 in \cite{chzh}. So by this theorem, the proof is done. Indeed,
\begin{enumerate}
\item\begin{align*}
h- |\nabla_\Sigma h|^2 &= \frac {|X|^2}4 - \beta-\frac{|X^T|^2}4\nonumber\\
&=\frac {|X^\perp|^2}4-\beta\\
&= \frac 14|{\bf H_f} - {\bf H}|^2-\beta\ge 0.\nonumber
\end{align*}
\item Since
$$
\Delta_\Sigma h=\Delta_\Sigma \frac{|X|^2}4=\frac n2+\frac 12\langle{\bf H}, X^\perp\rangle,
$$
we have
\begin{align*} \Delta_hh+h&=\Delta_\Sigma h-|\nabla_\Sigma h|^2+h\\
&=\frac n2+\frac 12\langle{\bf H}, X^\perp\rangle+\frac 14|{\bf H_f} - {\bf H}|^2-\beta\\
&=\frac n2+\frac 12\langle{\bf H}, {\bf H_f} -{\bf H}\rangle+ \frac 14|{\bf H_f} - {\bf H}|^2-\beta\\
&=\frac n2+\frac {\lambda^2}4-\beta-\frac {H^2}4 \\
&\le \frac n2+\frac{\lambda^2}4-\beta-\frac{\inf{H}^2}4=k.\\
\end{align*}
\end{enumerate}
\end{proof}
\subsection{A divergence type theorem} 
Let $\Sigma$  be an $n$-dimensional complete (without boundary)  $\lambda$-submanifolds properly, i.e. has polynomial volume growth,  immersed  in $\mathbb R^{n+p} .$ 

The condition of polynomial volume growth is essential for using an integral formula that is similar to the generalized divergence theorem for compact manifolds. We have the following theorem.
 \begin{theorem}\label{basis}
Let $u$ be a smooth function on $\Sigma.$ Assume that there exist positive constants $C$ and $d$ such that
 $ |\Delta_\Sigma u(X)|\le C|X|^{d}.$ Then
$$
\int_\Sigma e^{\frac{|X|^2}2}\Delta_f u(X)dV=0.
$$
\end{theorem}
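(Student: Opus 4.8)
Theorem \ref{basis} asserts that $\int_\Sigma e^{\frac{|X|^2}{2}}\Delta_f u\,dV=0$. The natural route to this is the weighted divergence (Stokes) theorem on the complete noncompact Gauss-weighted manifold $(\Sigma,e^{-\frac{|X|^2}{2}}dV)$, since the very definition of $\Delta_f$ exhibits $\Delta_f u$, against the Gauss density $e^{-f}dV$ with $f=\frac{|X|^2}{2}$, as a weighted divergence. The plan is to exhaust $\Sigma$ by extrinsic balls, reduce to the exact compactly supported identity, and then kill the transition terms by playing the rapid Gaussian decay off against the polynomial volume growth supplied by properness (the Proposition above).

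The engine is the divergence structure already built into $\Delta_f$: with $f=\frac{|X|^2}{2}$ the relation $\Delta_f u=e^{f}\divv_\Sigma(e^{-f}\nabla_\Sigma u)$ of Section 2 gives the pointwise identity
\[
e^{-\frac{|X|^2}{2}}\Delta_f u=\divv_\Sigma\left(e^{-\frac{|X|^2}{2}}\nabla_\Sigma u\right).
\]
Choosing a Lipschitz cutoff $\phi_r$ equal to $1$ on $B_r(0)\cap\Sigma$, supported in $B_{2r}(0)\cap\Sigma$, with $|\nabla_\Sigma\phi_r|\le C/r$, the ordinary divergence theorem on $\Sigma$ applied to the compactly supported field $\phi_r e^{-\frac{|X|^2}{2}}\nabla_\Sigma u$ produces, with no boundary contribution,
\[
\int_\Sigma \phi_r\, e^{-\frac{|X|^2}{2}}\Delta_f u\,dV=-\int_\Sigma e^{-\frac{|X|^2}{2}}\langle\nabla_\Sigma\phi_r,\nabla_\Sigma u\rangle\,dV.
\]
Everything then reduces to letting $r\to\infty$ on both sides.

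The step I expect to be the crux is that the hypothesis is a pointwise bound on $\Delta_\Sigma u$ alone, $|\Delta_\Sigma u|\le C|X|^{d}$, whereas both limits require control of $\nabla_\Sigma u$ against the weight: on the left so that $e^{-\frac{|X|^2}{2}}\Delta_f u=e^{-\frac{|X|^2}{2}}(\Delta_\Sigma u-\langle X,\nabla_\Sigma u\rangle)$ is integrable and $\phi_r$ can be removed by dominated convergence, and on the right so that the annular term vanishes. I would bridge this gap with a Caccioppoli/energy estimate: testing the divergence identity against $\psi^2 u$ for a second cutoff $\psi$ bounds $\int_\Sigma \psi^2 e^{-\frac{|X|^2}{2}}|\nabla_\Sigma u|^2\,dV$ by Gaussian-weighted integrals of $|u\,\Delta_\Sigma u|$, $|u||X||\nabla_\Sigma u|$ and $|u||\nabla_\Sigma\psi||\nabla_\Sigma u|$, which the polynomial Laplacian bound together with (crude) polynomial control of $u$ keep finite. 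This is exactly where the stated hypothesis, which governs only $\Delta_\Sigma u$, has to be supplemented; the Gaussian decay is strong enough that nothing beyond polynomial control of $u$ and $\nabla_\Sigma u$ is ever needed.

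The decisive point is quantitative. On the annulus $\{r\le|X|\le 2r\}\cap\Sigma$ the weight $e^{-\frac{|X|^2}{2}}$ is at most $e^{-r^2/2}$, which decays faster than any polynomial and therefore dominates both the polynomial volume growth of $\Sigma$ and the factor $C/r$ coming from $|\nabla_\Sigma\phi_r|$. Hence the Gaussian gradient energy and the transition term both tend to $0$, the left-hand side converges to $\int_\Sigma e^{-\frac{|X|^2}{2}}\Delta_f u\,dV$, and letting $r\to\infty$ yields the vanishing asserted in Theorem \ref{basis}.
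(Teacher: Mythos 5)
There is a genuine gap, and you have in fact flagged it yourself: your Caccioppoli step needs ``(crude) polynomial control of $u$'' (and, via the annulus term $\int e^{-\frac{|X|^2}{2}}\langle\nabla_\Sigma\phi_r,\nabla_\Sigma u\rangle\,dV$, of $\nabla_\Sigma u$), but the theorem hypothesizes \emph{only} $|\Delta_\Sigma u|\le C|X|^d$, and no such control is derivable from it. Concretely, take $\Sigma$ an $n$-plane through the origin (a $\lambda$-submanifold) and $u=\re\,e^{(x_1+ix_2)^2}=e^{x_1^2-x_2^2}\cos(2x_1x_2)$: then $\Delta_\Sigma u\equiv 0$, so the hypothesis holds trivially, yet $u$ and $|\nabla_\Sigma u|$ grow like $e^{|X|^2}$ along the $x_1$-axis, $e^{-\frac{|X|^2}{2}}\Delta_f u=-e^{-\frac{|X|^2}{2}}\langle X,\nabla_\Sigma u\rangle$ is not absolutely integrable, and your weighted gradient energy $\int\psi^2 e^{-\frac{|X|^2}{2}}|\nabla_\Sigma u|^2\,dV$ is infinite. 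So both the dominated-convergence passage on the left of your cutoff identity and the vanishing of the right-hand annular term break down: the cutoff-plus-energy route proves a weaker theorem with supplementary hypotheses, not the stated one.

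The paper's proof uses a different mechanism that sidesteps any control of $u$ or $\nabla_\Sigma u$, and this is the idea your proposal is missing. Exhaust $\Sigma$ by the extrinsic balls $B_R\cap\Sigma$ (with genuine boundary, not cutoffs; by Sard one may take $R$ regular). The divergence structure gives $\int_{B_R\cap\Sigma}e^{-\frac{|X|^2}{2}}\Delta_f u\,dV=\int_{\partial(B_R\cap\Sigma)}e^{-\frac{|X|^2}{2}}\langle\nabla_\Sigma u,\nu\rangle\,dA$, and the crucial observation is that the Gaussian weight is \emph{constant}, equal to $e^{-\frac{R^2}{2}}$, on $\partial(B_R\cap\Sigma)\subset\{|X|=R\}$, so it factors out of the flux. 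One then applies the \emph{unweighted} divergence theorem backwards to rewrite the flux as $\int_{B_R\cap\Sigma}\Delta_\Sigma u\,dV$, which is exactly the quantity the hypothesis controls: $e^{-\frac{R^2}{2}}\bigl|\int_{B_R\cap\Sigma}\Delta_\Sigma u\,dV\bigr|\le e^{-\frac{R^2}{2}}\,CR^{d}\cdot C_1R^{n+c}\to 0$ by properness (polynomial volume growth, as in the paper's Proposition 1). Note also that in light of the example above the conclusion must be read as the improper limit along this ball exhaustion (the integrand need not be in $L^1$); the paper's exhaustion delivers precisely that, whereas a cutoff formulation does not. Your quantitative instinct --- Gaussian decay beats polynomial volume growth --- is the right one; the missing idea is to convert the boundary flux of $\nabla_\Sigma u$ into the interior integral of $\Delta_\Sigma u$ via the constancy of the weight on spheres, rather than trying to estimate $\nabla_\Sigma u$ itself.
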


\begin{proof}
Suppose that $\Sigma$ is inside a ball, since it is proper it must be compact and the theorem holds true by divergence theorem. 
Now suppose that $\Sigma$ is not inside any ball, i.e. 
$\partial(B_R\cap\Sigma)\ne\emptyset$ when $R$ is large enough. We have
\begin{align*}
\int_{\partial(B_R\cap\Sigma)} e^{\frac{|X|^2}2}\Delta_f u(X)dV&=\int_{B_R\cap\Sigma} \divv_\Sigma (e^{-\frac{|X|^2}2}\nabla_\Sigma u(X))dV\\
&=e^{-\frac{R^2}2}\int_{\partial(B_R\cap\Sigma)}\left\langle \nabla_\Sigma u(X),\nu\right\rangle dA.
\end{align*}

 Because
\begin{align*}
\lim_{R\rightarrow\infty}e^{-\frac{R^2}2}\left |\int_{\partial(B_R\cap\Sigma)}\left\langle \nabla_\Sigma u(X),\nu\right\rangle dA\right |&=\lim_{R\rightarrow\infty}e^{-\frac{R^2}2}\left|\int_{B_R\cap\Sigma}\Delta_\Sigma u(X) dV\right|\\
&\le \lim_{R\rightarrow\infty}e^{-\frac{R^2}2}CR^{d}\int_{B_R\cap\Sigma} dV\\
&\le \lim_{R\rightarrow\infty}e^{-\frac{R^2}2}C_1CR^{d+n}=0.
\end{align*}
the theorem is proved
\end{proof}

\section{Halfspace type Theorems}
In this section, $\Sigma$ is always assumed to be an $n$-dimensional complete (without boundary)  $\lambda$-submanifold  properly immersed  in $\mathbb G^{n+p}, p\ge 1.$ 

Halfspace type theorems in this section can be seen as an application of the Theorem \ref{basis} and Lemma \ref{form} bellow.

Let $e_1,e_2, \ldots, e_{n+p}$ be the coordinate vector fields for $\mathbb G^{n+p}$  and $X=(x_1,x_2,$ $\ldots, x_{n+p})$ be the position vector field.  By a straightforward computation (see \cite{hidu} for the case $\lambda=0$), we have the following lemma.
\begin{lem}\label{form}
\begin{enumerate}
\item  
\begin{align}
\Delta_\Sigma x_i&=-x_i|e_i^\perp|^2+\lambda_i;\label{4}\\
  \Delta_f x_i&=-x_i+\lambda_i.\label{5}
\end{align}
\item 
\begin{align}\label{6}
\Delta_\Sigma\frac{|X|^2}2&=n+\sum\lambda_ix_i-|X^\perp|^2;\\
 \Delta_f\frac{|X|^2}2&=n+\sum\lambda_ix_i-|X|^2.\label{7}
\end{align}
\item 
\begin{align}\label{8}
\Delta_\Sigma\frac {x_i^2}2
&=|e_i^T|^2+\lambda_ix_i-\frac12x_i^2|e_i^\perp|^2;\\ 
\Delta_f\frac {x_i^2}2&=|e_i^T|^2+\lambda_ix_i-\frac12x_i^2.\label{9}
\end{align}
\end{enumerate}
\end{lem}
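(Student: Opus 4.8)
The plan is to reduce every identity to two structural facts about the isometric immersion $\Sigma\hookrightarrow\mathbb G^{n+p}$. The first is the Beltrami-type identity $\Delta_\Sigma X=\mathbf H$, that is, $\Delta_\Sigma x_i=\langle\mathbf H,e_i\rangle$ for each coordinate function $x_i=\langle X,e_i\rangle$; this is the only genuinely differential-geometric input, and it follows from $\overline{\nabla}_v X=v$ together with the definition of the mean curvature vector. The second is the defining equation of a $\lambda$-submanifold in Gauss space, $\mathbf H=\mathbf H_f-X^\perp$. I would set $\lambda_i:=\langle\mathbf H_f,e_i\rangle$, so that $\lambda_i$ is just the $i$-th component of the weighted mean curvature vector and $\sum_i\lambda_i^2=|\mathbf H_f|^2=\lambda^2$. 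Finally I record the two gradients I will repeatedly use, $\nabla_\Sigma x_i=e_i^T$ and $\nabla_\Sigma\frac{|X|^2}2=X^T$, each obtained by projecting the ambient gradients $e_i$ and $X$ onto $T\Sigma$.

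For the linear coordinate functions I would begin with $\Delta_\Sigma x_i=\langle\mathbf H,e_i\rangle=\langle\mathbf H_f,e_i\rangle-\langle X^\perp,e_i\rangle=\lambda_i-\langle X^\perp,e_i^\perp\rangle$, which gives (\ref{4}). To obtain (\ref{5}) I apply the Gauss-space conversion formula (\ref{3}), $\Delta_f u=\Delta_\Sigma u-\langle X,\nabla_\Sigma u\rangle$, with $u=x_i$: using $\langle X,e_i^T\rangle=\langle X^T,e_i\rangle=x_i-\langle X^\perp,e_i^\perp\rangle$, the two normal terms cancel and I am left with $\Delta_f x_i=\lambda_i-x_i$.

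The quadratic identities follow from the Leibniz rule. For $\frac{|X|^2}2$ I compute $\Delta_\Sigma\frac{|X|^2}2=\divv_\Sigma X^T=n+\langle\mathbf H,X^\perp\rangle$, where the $n$ comes from $\overline{\nabla}_v X=v$ and the remaining term from the second fundamental form; rewriting $\langle\mathbf H,X^\perp\rangle=\langle\mathbf H_f,X\rangle-|X^\perp|^2=\sum_i\lambda_i x_i-|X^\perp|^2$ gives (\ref{6}), and subtracting $\langle X,X^T\rangle=|X^T|^2=|X|^2-|X^\perp|^2$ via (\ref{3}) gives (\ref{7}). For $\frac{x_i^2}2$ I expand $\Delta_\Sigma\frac{x_i^2}2=x_i\Delta_\Sigma x_i+|\nabla_\Sigma x_i|^2=x_i\Delta_\Sigma x_i+|e_i^T|^2$ and substitute the formula for $\Delta_\Sigma x_i$ just found, which yields (\ref{8}); the weighted version (\ref{9}) then comes from one more application of (\ref{3}).

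Since the two structural inputs do all the conceptual work, the remaining difficulty is purely the bookkeeping of tangential and normal projections, and the step I expect to be most error-prone is keeping the precise coefficients of the $x_i^2$- and $|X^\perp|^2$-terms correct while simplifying the normal contributions $\langle X^\perp,e_i^\perp\rangle$. As a built-in consistency check I would sum the per-coordinate identities over $i$, using the trace relations $\sum_i|e_i^T|^2=n$ and $\sum_i|e_i^\perp|^2=p$, and verify that (\ref{8}) and (\ref{9}) sum to (\ref{6}) and (\ref{7}) respectively; this cross-check is what pins down the constants unambiguously.
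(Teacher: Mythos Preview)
Your route is exactly the ``straightforward computation'' the paper alludes to (and defers to \cite{hidu} for $\lambda=0$): use $\Delta_\Sigma x_i=\langle\mathbf H,e_i\rangle$, replace $\mathbf H$ by $\mathbf H_f-X^\perp$, convert to $\Delta_f$ via (\ref{3}), and handle the quadratics by the product rule. There is no alternative argument in the paper to compare against.

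Where your proposal does slip is in the final identification. The expressions your method actually produces,
\[
\Delta_\Sigma x_i=\lambda_i-\langle X^\perp,e_i^\perp\rangle,\qquad
\Delta_\Sigma\tfrac{x_i^2}{2}=|e_i^T|^2+\lambda_i x_i-x_i\langle X^\perp,e_i^\perp\rangle,\qquad
\Delta_f\tfrac{x_i^2}{2}=|e_i^T|^2+\lambda_i x_i-x_i^2,
\]
do \emph{not} coincide with the printed (\ref{4}), (\ref{8}), (\ref{9}). In general $\langle X^\perp,e_i^\perp\rangle\neq x_i|e_i^\perp|^2$ (already on $S^1(r)\subset\mathbb R^2$ at a point off the coordinate axes the two differ), and the factor $\tfrac12$ in (\ref{8})--(\ref{9}) is incompatible with the Leibniz expansion $\Delta_\Sigma\tfrac{x_i^2}{2}=x_i\Delta_\Sigma x_i+|e_i^T|^2$ you yourself wrote down. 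Your built-in consistency check is precisely what exposes this: summing your version of (\ref{8}) over $i$, via $\sum_i x_i e_i^\perp=X^\perp$, recovers (\ref{6}) exactly, whereas summing the printed (\ref{8}) gives $n+\sum_i\lambda_i x_i-\tfrac12\sum_i x_i^2|e_i^\perp|^2$, which is not (\ref{6}). So the formulas you derive are the correct ones; (\ref{4}), (\ref{8}) and (\ref{9}) as printed are misprints, and you should not assert that your computation ``gives (\ref{4})'' or ``yields (\ref{8})'' without flagging the discrepancy. The identities the paper actually relies on downstream, namely (\ref{5}), (\ref{6}), (\ref{7}), are the ones your derivation reproduces verbatim.
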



\subsection{Halfspace type Theorem w.r.t. hyperplanes}

\begin{theorem}\label{hype}
Let  $P$ be a hyperplane in $\mathbb G^{n+p},$  such that $d(O,P)=\lambda.$ If  $\Sigma$ lies in the side of $P$ not containing the origin, then $\Sigma\subset P.$ \end{theorem}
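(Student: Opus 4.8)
The plan is to apply the divergence-type theorem (Theorem \ref{basis}) to a single coordinate function, exploiting the fact that on the prescribed side of $P$ the weighted Laplacian of that coordinate has a fixed sign. First I would choose Euclidean coordinates adapted to $P$: after a rigid motion fixing the origin, set $m=n+p$ and take $P=\{x_m=\lambda\}$, so that the origin lies in $\{x_m<\lambda\}$ and $d(O,P)=\lambda$ as required. The hypothesis that $\Sigma$ lies on the side of $P$ not containing the origin then reads $x_m\ge\lambda$ at every point of $\Sigma$.

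Next I would take $u=x_m$ and use the identity (\ref{5}) from Lemma \ref{form}, $\Delta_f x_m=-x_m+\lambda_m$, where $\lambda_m=\langle\mathbf H_f,e_m\rangle$ is the $m$-th component of the weighted mean curvature vector. Since $|\mathbf H_f|=\lambda$, Cauchy--Schwarz gives $\sum_i\lambda_i^2=\lambda^2$ and hence $\lambda_m\le|\lambda_m|\le\lambda$. Combining this with $x_m\ge\lambda$ yields the key sign estimate
\begin{equation*}
\Delta_f x_m=-x_m+\lambda_m\le-\lambda+\lambda=0
\end{equation*}
throughout $\Sigma$.

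To invoke Theorem \ref{basis} I must verify its growth hypothesis. From (\ref{4}) we have $\Delta_\Sigma x_m=-x_m|e_m^\perp|^2+\lambda_m$, and using $|e_m^\perp|\le1$ together with $|\lambda_m|\le\lambda$ this gives $|\Delta_\Sigma x_m|\le|X|+\lambda\le C|X|$ for $|X|\ge1$, so the condition holds with $d=1$. Theorem \ref{basis} then yields $\int_\Sigma e^{|X|^2/2}\,\Delta_f x_m\,dV=0$. The integrand is the product of the strictly positive weight $e^{|X|^2/2}$ and the nonpositive function $\Delta_f x_m$, hence it is nonpositive with vanishing integral; therefore $\Delta_f x_m\equiv0$ on $\Sigma$. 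This forces $x_m=\lambda_m$ pointwise, and since then $\lambda\le x_m=\lambda_m\le\lambda$, we conclude $x_m\equiv\lambda$, i.e. $\Sigma\subset P$.

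The only genuinely delicate point---and the step I would be most careful to state correctly---is the fusion of the two inequalities $-x_m\le-\lambda$ and $\lambda_m\le\lambda$ into a single definite sign for $\Delta_f x_m$; everything else is either a normalization of coordinates or a routine verification of the growth bound. The argument uses the constancy $|\mathbf H_f|=\lambda$ in an essential way, through $|\lambda_m|\le\lambda$, and it is precisely the matching $d(O,P)=\lambda$ that makes the two bounds align, so that equality in the vanishing integral propagates rigidly to $\Sigma\subset P$.
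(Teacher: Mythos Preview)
Your proof is correct and follows essentially the same approach as the paper: normalize coordinates so that $P=\{x_{n+p}=\lambda\}$, apply Theorem~\ref{basis} to the coordinate function $x_{n+p}$ using the identities (\ref{4}) and (\ref{5}) from Lemma~\ref{form}, and exploit the bound $\lambda_{n+p}=\langle\mathbf H_f,e_{n+p}\rangle\le\lambda$ together with $x_{n+p}\ge\lambda$ to force equality. The only cosmetic difference is that the paper phrases the squeeze as $0\le\int e^{-|X|^2/2}(x_{n+p}-\lambda)\,dV\le\int e^{-|X|^2/2}(x_{n+p}-\lambda_{n+p})\,dV=0$ and reads off $x_{n+p}\equiv\lambda$ directly, whereas you first deduce $\Delta_f x_m\equiv0$ and then unwind the two inequalities; the content is identical.
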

\begin{proof}
Without loss of generality, we can suppose that $P$ is the hyperplane $x_{n+p}=\lambda$ and $\Sigma$ is in the closed half space $\{(x_1, x_2,\ldots, x_{n+p}):x_{n+p}\ge\lambda\}.$
By (\ref{4}) and (\ref{5})
\begin{align*}
\Delta_\Sigma x_{n+p}&=-x_{n+p}|e_{n+p}^\perp|^2+\lambda_{n+p},\\
  \Delta_f x_{n+p}&=-x_{n+p}+\lambda_{n+p},
\end{align*}
where $\lambda_{n+p}=\langle{\bf H_f},e_{n+p}\rangle.$ We can check that  $x_{n+p}$ satisfies the condition in Theorem \ref{basis} and because  $x_{n+p}\ge\lambda$ on $\Sigma,$  we get
\begin{align*}
0\le\int_\Sigma e^{-\frac{X^2}2}(x_{n+p}-\lambda)dV\le\int_\Sigma e^{-\frac{X^2}2}(x_{n+p}-\lambda _{n+p})dV=0.
\end{align*}
 It follows that $x_{n+p}=\lambda,$  i.e. $\Sigma\subset P.$
\end{proof}

\begin{rem} 
\begin{enumerate}
\item  If $p=1,$ then $\Sigma= P$ (see Theorem 1.4 in \cite{caes}).
\item  The case $\lambda=0$ was proved in \cite{hidu} (Theorem 7).
\end{enumerate}
 \end{rem}
 With the same arguments as in the proof of Corollary 9-10 in \cite{hidu} we have
\begin{cor} \label{co10}
If there exist $p$ orthonormal vectors $v_1, v_2,\ldots, v_{p}$ such that for $i=1,2,\ldots, p, |\langle X, v_i\rangle|\ge \lambda,$  then $\Sigma$ is an $n$-plane P, with $d(O,P)=\lambda.$
\end{cor}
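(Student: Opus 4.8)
The plan is to deduce this from Theorem \ref{hype} by reducing the hypothesis about $p$ directions to a statement about a single hyperplane, and then iterating. First I would observe that the hypothesis $|\langle X, v_i\rangle|\ge\lambda$ for each orthonormal $v_i$ means that for every $i$, the submanifold $\Sigma$ avoids the open slab $\{|\langle X,v_i\rangle|<\lambda\}$; by continuity and connectedness of $\Sigma$, the value $\langle X,v_i\rangle$ cannot change sign across this slab, so $\Sigma$ lies entirely in one of the two closed half-spaces $\{\langle X,v_i\rangle\ge\lambda\}$ or $\{\langle X,v_i\rangle\le-\lambda\}$. After replacing $v_i$ by $-v_i$ if necessary, I may assume $\langle X,v_i\rangle\ge\lambda$ on $\Sigma$ for each $i$.

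Next, for a fixed $i$, let $P_i=\{\langle X,v_i\rangle=\lambda\}$. This is a hyperplane with $d(O,P_i)=\lambda$ (since $v_i$ is a unit vector, the distance from the origin to $\{\langle X,v_i\rangle=\lambda\}$ is exactly $\lambda$), and $\Sigma$ lies in the closed half-space bounded by $P_i$ on the side $\langle X,v_i\rangle\ge\lambda$, which is the side not containing the origin. Theorem \ref{hype} is rotation-invariant, so I can apply it directly (or rotate coordinates so that $v_i=e_{n+p}$): it yields $\Sigma\subset P_i$, i.e. $\langle X,v_i\rangle\equiv\lambda$ on $\Sigma$.

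Applying this for each $i=1,2,\dots,p$ gives $\langle X,v_i\rangle\equiv\lambda$ for all $i$ simultaneously, so $\Sigma$ is contained in the intersection $\bigcap_{i=1}^p P_i$, which is an affine subspace of codimension $p$ in $\mathbb{G}^{n+p}$, hence an $n$-plane; call it $Q$. Writing $X=\sum_{i=1}^p\lambda\,v_i + Y$ with $Y\perp\operatorname{span}\{v_1,\dots,v_p\}$, the projection of the origin onto $Q$ is $X^\perp=\sum_{i=1}^p\lambda\,v_i$ (since the $v_i$ are orthonormal and orthogonal to $Q$), whose length is $\sqrt{\sum_i\lambda^2}=\lambda\sqrt{p}$, so in fact $d(O,Q)=\lambda\sqrt p$. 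The main subtlety is thus to state the conclusion correctly: the corollary as written asserts $d(O,P)=\lambda$, but with $p$ mutually orthogonal constraints each at distance $\lambda$ the true distance is $\lambda\sqrt p$; this discrepancy (which does not arise when $p=1$) is the one point I would flag, and it is the only place the argument requires care beyond the direct iteration of Theorem \ref{hype}.
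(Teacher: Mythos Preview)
Your approach—iterating Theorem~\ref{hype} over the $p$ directions after a connectedness/sign argument—is exactly what the paper intends (it simply refers to Corollaries~9--10 of \cite{hidu} for the details). The argument is correct through the point where you conclude that $\Sigma$ coincides with the $n$-plane $Q=\bigcap_{i=1}^{p} P_i$.

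The one real gap is in your final paragraph: what you flag as a discrepancy in the statement is in fact the observation that finishes the proof. You correctly compute $d(O,Q)=\lambda\sqrt{p}$ from the $p$ orthogonal constraints; but $\Sigma$ is assumed to be a $\lambda$-submanifold, and by Example~\ref{pla} an $n$-plane is a $\lambda$-submanifold precisely when its distance from the origin equals $\lambda$. Hence $\lambda\sqrt{p}=\lambda$, which forces $\lambda=0$ or $p=1$; in either case $d(O,P)=\lambda$ holds as stated. So there is no error in the corollary: rather, your computation shows that for $\lambda>0$ and $p\ge 2$ the hypothesis can never be satisfied, and the statement is vacuously true in that range. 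You should replace the ``flag'' with this closing remark.
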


\begin{cor} [A Bernstein type theorem] 
Let  $F=(f_1, f_2, \ldots, f_p) :\mathbb G^n\rightarrow \mathbb G^{p}$  be a smooth function and $\Sigma=\{({\bf x}, F({\bf x}))\in \mathbb G^{n+p}: {\bf x}\in \mathbb G^n\}$ be its graph that is a $\lambda$-submanifold. If there exist at least $(p-1)$ functions $f_i$ such that $|f_i|\ge \lambda,$ then $\Sigma$ is an $n$-plane.
\end{cor}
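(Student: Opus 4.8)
The natural first attempt is to apply Corollary \ref{co10} directly, but the hypothesis only supplies $p-1$ orthonormal directions with the required bound, one short of what that corollary needs; so I would instead use Theorem \ref{hype} together with a reduction in codimension. Writing the position vector along $\Sigma$ as $X=(\mathbf{x},F(\mathbf{x}))$, the coordinate $x_{n+i}$ restricts to $f_i(\mathbf{x})$, so $\langle X,e_{n+i}\rangle=f_i$. Hence the hypothesis says that $(p-1)$ of the orthonormal directions $e_{n+i}$ satisfy $|\langle X,e_{n+i}\rangle|\ge\lambda$ on $\Sigma$. First I would make each such $f_i$ constant: fixing an index $i$ with $|f_i|\ge\lambda$ and using that $\Sigma$ is connected and (when $\lambda>0$) $f_i$ never vanishes, $f_i$ has a fixed sign, so $\Sigma$ lies on the side of one of the hyperplanes $\{x_{n+i}=\pm\lambda\}$ not containing $O$, each being at distance $\lambda$ from $O$. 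Theorem \ref{hype} then forces $\Sigma$ into that hyperplane, i.e. $f_i\equiv\pm\lambda$.

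Carrying this out for all $(p-1)$ bounded functions places $\Sigma$ inside the affine $(n+1)$-plane $A=\bigcap_i\{x_{n+i}=c_i\}$ with $|c_i|=\lambda$, so that $\Sigma$ becomes the graph of the single remaining function over $\mathbb{G}^n$, a hypersurface of $A$. To close the last codimension I would use the $\lambda$-equation inside $A$. Since $A$ is totally geodesic in $\mathbb{G}^{n+p}$, the mean curvature vector $\mathbf{H}$ is tangent to $A$, while $X$ has the constant component $\sum_i c_i e_{n+i}$ normal to $A$, of length $\sqrt{p-1}\,\lambda$. Splitting $X^\perp$ into its $A$-tangential part $X^\perp_A$ and this normal part and using orthogonality gives $\lambda^2=|\mathbf{H}_f|^2=|\mathbf{H}+X^\perp_A|^2+(p-1)\lambda^2$, whence $|\mathbf{H}+X^\perp_A|^2=(2-p)\lambda^2$.

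The main obstacle is this final step. For $\lambda>0$ the last identity already forces $p\le 2$, so for $p\ge 3$ the hypotheses are empty and there is nothing to prove; when $p=2$ it gives $\mathbf{H}+X^\perp_A=0$. Viewing $A$ as a Gauss space $\mathbb{G}^{n+1}$ whose Gaussian is centred at the foot $O_A$ of the perpendicular from $O$, this is precisely the self-shrinker equation, so $\Sigma$ is a complete, proper, entire graphical self-shrinker over $\mathbb{G}^n$; the same conclusion holds directly when $\lambda=0$. I would then invoke the Bernstein theorem for entire graphical self-shrinkers to conclude that $\Sigma$ is a hyperplane of $A$, hence an $n$-plane of $\mathbb{G}^{n+p}$. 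The delicate point to verify carefully is the orthogonal splitting of $\mathbf{H}_f$ relative to $A$, together with the fact that the reduced equation is the self-shrinker equation for the \emph{induced} Gaussian, whose centre is $O_A$ rather than $O$.
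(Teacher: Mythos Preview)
Your approach is essentially the one the paper has in mind: the paper does not give an independent argument but refers to ``the same arguments as in the proof of Corollary~9--10 in \cite{hidu}'', and those arguments amount to exactly what you do --- apply Theorem~\ref{hype} in each of the $p-1$ admissible coordinate directions to pin $\Sigma$ inside an affine $(n+1)$-plane, observe that the residual equation is the self-shrinker equation there, and then quote the graphical self-shrinker Bernstein theorem (Wang / Ecker--Huisken) for the last step.  Your orthogonal splitting of $\mathbf{H}_f$ along and across $A$ is correct, and your observation that for $\lambda>0$ and $p\ge 3$ the identity $|\mathbf{H}+X^\perp_A|^2=(2-p)\lambda^2$ already rules out any such $\Sigma$ is a nice sharpening. (In fact one sees this already after a single application of Theorem~\ref{hype}: the equality case there forces $\langle\mathbf{H}_f,e_{n+i}\rangle=\pm\lambda$, hence $\mathbf{H}_f=\pm\lambda e_{n+i}$ since $|\mathbf{H}_f|=\lambda$, so a second independent direction is immediately incompatible.)

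One genuine loose end: your sentence ``the same conclusion holds directly when $\lambda=0$'' does not go through as written. When $\lambda=0$ the hypothesis $|f_i|\ge\lambda$ is vacuous, so you cannot invoke Theorem~\ref{hype} at all (you need $f_i$ of fixed sign, which in your argument was supplied by $|f_i|\ge\lambda>0$). The $\lambda=0$ analogue in \cite{hidu} presumably states the hypothesis as ``$f_i$ does not change sign'', which for $\lambda>0$ follows from $|f_i|\ge\lambda$ but for $\lambda=0$ is genuinely extra; without it you would be claiming that every entire graphical self-shrinker of arbitrary codimension is flat. Similarly, the edge case $p=1$ (where ``at least $p-1$'' means no condition at all) is not covered by your reduction. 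These are boundary cases of the statement rather than flaws in your method, but they should be flagged explicitly rather than absorbed into ``the same conclusion holds directly''.
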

\subsection{$\lambda$-submanifolds inside or outside a ball}
Denote by $B^{n+p}(r)=\{x\in\mathbb G^{n+p}: x^2<r\}$  the standard ball with radius $r$ and $E^{n+p}(r)=\{x\in\mathbb G^{n+p}: x^2\ge r\}$ the complement of $B^{n+p}(r).$

\begin{theorem}\label{theoball}
If $\Sigma^n\subset E^{n+p}(r_2),$  where $r_2=\frac{\lambda+\sqrt{\lambda^2+4n}}2,$ then $\Sigma$ is compact and $\Sigma\subset S^{n+p-1}(r_2),$ i.e. $\Sigma$ is a minimal submanifold of $S^{n+p-1}(r_2).$  Moreover, if $p=1,$ then $\Sigma= S^{n}(r_2).$ 
\end{theorem}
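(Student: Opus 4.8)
Let me understand what Theorem 3.5 is claiming. We have a complete proper $\lambda$-submanifold $\Sigma^n$ in $\mathbb{G}^{n+p}$ lying in the exterior region $E^{n+p}(r_2) = \{x^2 \ge r_2\}$ where $r_2 = \frac{\lambda+\sqrt{\lambda^2+4n}}{2}$. Wait, let me check this carefully—in Example 2.5 and elsewhere, the sphere $S^n(r)$ with $\lambda = |\frac{r^2-n}{r}|$ gives $r = \frac{\pm\lambda+\sqrt{\lambda^2+4n}}{2}$. So $r_2$ is exactly the radius where $r^2 = \lambda r + n$, i.e., $r^2 - n = \lambda r$, meaning the sphere $S^n(r_2)$ is a $\lambda$-submanifold (the outer root). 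The claim is that $\Sigma$ must be compact, lie in the sphere $S^{n+p-1}(r_2)$, and be minimal there; and if $p=1$, it's the whole sphere.

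The plan is to study the drift Laplacian of $\frac{|X|^2}{2}$ and to exploit the fact that $r_2$ is precisely the larger root of $t^2-\lambda t-n=0$, so that $r_2^2=\lambda r_2+n$. First I would record, from (\ref{7}) in Lemma \ref{form}, the identity
$$\Delta_f\frac{|X|^2}{2}=n+\sum\lambda_i x_i-|X|^2,$$
and note that $\sum\lambda_i x_i=\langle{\bf H}_f,X\rangle=\langle{\bf H}_f,X^\perp\rangle$ since ${\bf H}_f$ is normal to $\Sigma$. By Cauchy--Schwarz, $|{\bf H}_f|=\lambda$, and $|X^\perp|\le|X|$, this gives
$$\Delta_f\frac{|X|^2}{2}\le n+\lambda|X|-|X|^2=-(|X|-r_2)(|X|-r_1),$$
where $r_1<0<r_2$ are the roots of $t^2-\lambda t-n$. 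On $\Sigma\subset E^{n+p}(r_2)$ we have $|X|\ge r_2$, so the right-hand side is nonpositive; thus $\Delta_f\frac{|X|^2}{2}\le 0$ pointwise on $\Sigma$.

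Next I would apply the divergence-type theorem (Theorem \ref{basis}) to the function $u=\frac{|X|^2}{2}$. The growth hypothesis is verified from (\ref{6}): since $|\langle{\bf H}_f,X^\perp\rangle|\le\lambda|X|$ and $|X^\perp|^2\le|X|^2$, one has $|\Delta_\Sigma\frac{|X|^2}{2}|\le C|X|^2$ for $|X|$ large, so the hypothesis holds with $d=2$. Theorem \ref{basis} then yields $\int_\Sigma e^{|X|^2/2}\,\Delta_f\frac{|X|^2}{2}\,dV=0$. As the weight $e^{|X|^2/2}$ is strictly positive while the integrand $\Delta_f\frac{|X|^2}{2}$ is everywhere nonpositive, we conclude $\Delta_f\frac{|X|^2}{2}\equiv 0$ on $\Sigma$.

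The rigidity step is then to unwind the equalities. From $\Delta_f\frac{|X|^2}{2}=0$ we get $\langle{\bf H}_f,X^\perp\rangle=|X|^2-n$ at every point; combined with $\langle{\bf H}_f,X^\perp\rangle\le\lambda|X|$ this forces $|X|^2-\lambda|X|-n\le 0$, i.e.\ $|X|\le r_2$. Together with the hypothesis $|X|\ge r_2$ this pins $|X|\equiv r_2$, so $\Sigma\subset S^{n+p-1}(r_2)$; being a closed subset of a compact sphere, $\Sigma$ is compact. Finally, because $r_2=\frac{\lambda+\sqrt{\lambda^2+4n}}{2}$ is exactly the radius singled out in Example \ref{min}(\ref{11}), the $\lambda$-submanifold $\Sigma$ lying in $S^{n+p-1}(r_2)$ must be minimal in that sphere. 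When $p=1$, $\Sigma$ is a compact (hence closed) immersed $n$-submanifold of the $n$-dimensional sphere $S^n(r_2)$, so the immersion is open; being both open and closed in the connected sphere, its image is all of $S^n(r_2)$, giving $\Sigma=S^n(r_2)$.

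I expect the main obstacle to be the rigidity step: ensuring the Cauchy--Schwarz estimate is tight enough to force $|X|$ to equal exactly $r_2$ rather than merely to lie in an interval, and then correctly invoking Example \ref{min} to convert the pinched $\lambda$-submanifold into a minimal submanifold of the sphere. By contrast, the growth check needed to apply Theorem \ref{basis} is routine, as is the dimension count closing the case $p=1$.
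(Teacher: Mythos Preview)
Your proof is correct and follows essentially the same route as the paper: apply Theorem~\ref{basis} to $u=\frac{|X|^2}{2}$, use the bound $\sum\lambda_i x_i\le\lambda|X|$ together with the factorization of $|X|^2-\lambda|X|-n$ to see that $\Delta_f u\le 0$ on $E^{n+p}(r_2)$, and force equality; the paper's proof is the same argument in somewhat terser form. Two minor cosmetic remarks: the weight in Theorem~\ref{basis} should read $e^{-|X|^2/2}$ (the stated $e^{|X|^2/2}$ is a typo, as you can see from its proof and from equation~(\ref{14})), and your auxiliary symbol $r_1$ for the negative root of $t^2-\lambda t-n$ clashes with the paper's later use of $r_1=\frac{-\lambda+\sqrt{\lambda^2+4n}}{2}$ in Theorem~\ref{theoball2}.
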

\begin{proof}
From (\ref{6}) we can see that we can apply Theorem \ref{basis} for function $\frac{|X|^2}2,$  and by (\ref{7})
\begin{equation}\label{14}
\int_\Sigma e^{-\frac{X^2}2}(n+\lambda|X|-|X|^2)dV\ge\int_\Sigma e^{-\frac{X^2}2}(n+\sum\lambda_ix_i-|X|^2)dV=0.
\end{equation}

If $\Sigma\subset {E^{n+p}(r_2)},$  then $n+\lambda|X|-|X|^2\le 0$ . From (\ref{14}), it follows that  $n+\lambda|X|-|X|^2= 0,$ i.e. $\Sigma\subset S^{n+p-1}(r_2)$ and it is a minimal submanifold of  $S^{n+p-1}(r_2)$ (see Example  \ref{min}). Since $\Sigma$ is proper, it must be compact.
The case $p=1$ is obvious.
\end{proof}
Because 
\begin{equation}
|X|\ge |X^\perp|=|{\bf H}-{\bf H_f}|.
\end{equation}
we have
\begin{cor} If
\begin{equation} \label{332}
|{\bf H}-{\bf H_f}| \ge \frac {\lambda+\sqrt{\lambda^2+4n}}2, 
\end{equation}
then $|{\bf H}-{\bf H_f}| =\frac {\lambda+\sqrt{\lambda^2+4n}}2,$ and $\Sigma$ is a minimal submanifold in the sphere $S^{n+p-1}(\frac {\lambda+\sqrt{\lambda^2+4n}}2).$
\end{cor}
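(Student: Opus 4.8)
The plan is to reduce the corollary directly to Theorem~\ref{theoball} by translating the hypothesis on $|{\bf H}-{\bf H_f}|$ into a containment statement, and then to recover the claimed equality from the conclusion of that theorem. Write $r_2=\frac{\lambda+\sqrt{\lambda^2+4n}}{2}$ throughout. The whole argument rests on the pointwise inequality displayed just before the statement, namely $|X|\ge |X^\perp|=|{\bf H}-{\bf H_f}|$, which holds because $X^\perp$ is the orthogonal projection of $X$ onto the normal bundle and because $X^\perp={\bf H_f}-{\bf H}$ by the definition of the weighted mean curvature vector.

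First I would use the hypothesis. Since $|{\bf H}-{\bf H_f}|\ge r_2$ at every point of $\Sigma$, the chain $|X|\ge |X^\perp|=|{\bf H}-{\bf H_f}|\ge r_2$ shows that $|X|\ge r_2$ everywhere on $\Sigma$, that is, $\Sigma\subset E^{n+p}(r_2)$. This is precisely the hypothesis of Theorem~\ref{theoball}, which then yields at once that $\Sigma$ is compact, that $\Sigma\subset S^{n+p-1}(r_2)$, and that $\Sigma$ is a minimal submanifold of that sphere.

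It remains to upgrade the inequality $|{\bf H}-{\bf H_f}|\ge r_2$ to the asserted equality. Once $\Sigma\subset S^{n+p-1}(r_2)$, at each point the tangent space $T_X\Sigma$ is contained in the tangent space of the sphere, which is the orthogonal complement of the position vector $X$. Hence $X$ is everywhere normal to $\Sigma$, so $X^T=0$ and $X^\perp=X$; consequently $|{\bf H}-{\bf H_f}|=|X^\perp|=|X|=r_2$, as required.

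I do not expect any serious obstacle here: the substance is carried entirely by Theorem~\ref{theoball}, and the corollary is a bookkeeping consequence of it together with the elementary bound $|X|\ge|X^\perp|$. The only point meriting a moment's care is the final step, where one must observe that membership in the sphere forces the position vector to be purely normal; this is exactly what promotes the inequality to an equality and pins down the value of $|{\bf H}-{\bf H_f}|$.
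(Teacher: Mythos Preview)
Your proposal is correct and follows exactly the approach the paper intends: you feed the pointwise inequality $|X|\ge |X^\perp|=|{\bf H}-{\bf H_f}|$ into the hypothesis to obtain $\Sigma\subset E^{n+p}(r_2)$ and then invoke Theorem~\ref{theoball}. Your additional justification that $X=X^\perp$ once $\Sigma\subset S^{n+p-1}(r_2)$, forcing the equality $|{\bf H}-{\bf H_f}|=r_2$, is a welcome clarification of a step the paper leaves implicit.
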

\begin{rem}
When $\lambda=0,$ (\ref{332}) becomes $H\ge \sqrt{n},$ the corollary was proved in \cite{cali} (Proposition 5.1).
\end{rem}

Since 
$$
n-\lambda|X|-|X|^2\le n+\sum\lambda_ix_i-|X|^2,
$$
by a similar proof, we have 
\begin{theorem}\label{theoball2}
If $\Sigma\subset \overline {B^{n+p }(\sqrt{r_1})},$ where $r_1=\frac{-\lambda+\sqrt{\lambda^2+4n}}2,$ then $\Sigma$ is compact, $\Sigma\subset S^{n+p-1}(r_1),$ and it is a minimal submanifold of $S^{n+p-1}(r_1).$  Moreover, if $p=1,$ then $\Sigma= S^{n}(r_1).$ 
\end{theorem}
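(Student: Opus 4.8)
The plan is to run the argument of Theorem \ref{theoball} with the exterior estimate replaced by its interior counterpart, the role of $+\lambda$ being taken by $-\lambda$. First I would apply the divergence-type Theorem \ref{basis} to $u=\frac{|X|^2}{2}$: by (\ref{6}) the Laplacian $\Delta_\Sigma u = n+\sum\lambda_i x_i-|X|^2$ grows at most quadratically in $|X|$ (the curvature term is controlled by $|\langle\mathbf{H}_f,X\rangle|\le\lambda|X|$), so the hypothesis of Theorem \ref{basis} is met, exactly as in the proof of Theorem \ref{theoball}, and using (\ref{7}) it gives
$$\int_\Sigma e^{-\frac{|X|^2}{2}}\left(n+\sum\lambda_i x_i-|X|^2\right)dV=0.$$

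Next I would insert the pointwise bound displayed just before the statement, namely $n-\lambda|X|-|X|^2\le n+\sum\lambda_i x_i-|X|^2$, which comes from $\sum\lambda_i x_i=\langle\mathbf{H}_f,X^\perp\rangle\ge-\lambda|X|$. Combined with the identity above this yields $\int_\Sigma e^{-\frac{|X|^2}{2}}\left(n-\lambda|X|-|X|^2\right)dV\le 0$. Since $r_1=\frac{-\lambda+\sqrt{\lambda^2+4n}}{2}$ is exactly the positive root of $t^2+\lambda t-n=0$, the hypothesis $\Sigma\subset\overline{B^{n+p}(\sqrt{r_1})}$, i.e.\ $|X|\le r_1$ on $\Sigma$, forces $n-\lambda|X|-|X|^2\ge 0$ everywhere; as the Gaussian weight is strictly positive, a non-negative integrand with non-positive integral must vanish identically, so $|X|\equiv r_1$ and $\Sigma\subset S^{n+p-1}(r_1)$. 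By item \ref{11} of Example \ref{min} (taken with the minus sign) $\Sigma$ is then minimal in $S^{n+p-1}(r_1)$; being proper and now bounded it is compact, and when $p=1$ it is an $n$-dimensional compact boundaryless submanifold of the $n$-sphere $S^n(r_1)$, hence an open and closed subset of it, so $\Sigma=S^n(r_1)$.

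I do not expect a genuine obstacle here, since this is a sign-adjusted copy of Theorem \ref{theoball}. The two points that still deserve a line of checking are the selection of the correct root — one must verify that $r_1$ is the value of $|X|$ at which $n-\lambda|X|-|X|^2$ changes sign, and that $|X|\le r_1$ is the side on which this quantity is non-negative — and the passage from the vanishing integral to the pointwise identity, which is legitimate precisely because $e^{-|X|^2/2}>0$ everywhere. Both are routine.
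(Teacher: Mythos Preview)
Your proposal is correct and follows exactly the route the paper itself takes: the paper's proof is nothing more than the single line ``since $n-\lambda|X|-|X|^2\le n+\sum\lambda_ix_i-|X|^2$, by a similar proof [as Theorem \ref{theoball}], we have \ldots'', and you have merely spelled out that similar proof in full. Your identification of $r_1$ as the positive root of $t^2+\lambda t-n=0$, the use of Theorem \ref{basis} on $\tfrac{|X|^2}{2}$, and the appeal to Example \ref{min} for minimality are all precisely what the paper intends.
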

\begin{rem}
\begin{enumerate}
\item The case $\lambda=0,$ i.e. $r_1=r_2$ was proved in \cite{hidu} (Theorem 11).
\item The following example shows that
the case $\lambda\ne 0$ is quite different with the case $\lambda=0.$ There exists a $\lambda$-submanifold is outside $S^n(r_1)$ and inside $S^n(r_2).$

In $\mathbb G^6$ consider $\Sigma=S^4(I, 2)\subset \{(x_1, x_2, \ldots, x_6: \ x_6=3\},$ where $I(0,0,\ldots, 0, 3).$ In this case $n=4$ and $\Sigma$ is a $\lambda$-submanifold, $\lambda=3,$  lying on the sphere $S^5(\sqrt{13}).$
It is easy to check that $\Sigma$ is outside the sphere $S^5(r_1)$ and inside the sphere $S^5(r_2),$ where
$$r_1=\frac{-\lambda+\sqrt{\lambda^2+4n}}2=1,$$
$$r_2=\frac{\lambda+\sqrt{\lambda^2+4n}}2=4.$$

\end{enumerate}
\end{rem}

 \subsection{Halfspace type results w. r. t.  cylinders}
  
  \begin{theorem} 
Let $k \in \{p, p+1,$ $ . . . , n+p-2\}, q=n+p-k-1$ and $\displaystyle r=\frac{-\lambda+\sqrt{\lambda^2+4(n-q)}}2.$ If  $\Sigma\subset \overline{B^{k+1}(r)}\times \mathbb R^q,$ then $\Sigma\subset S^{k}(r)\times \mathbb R^q.$ 
\end{theorem}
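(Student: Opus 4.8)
The plan is to run the argument of Theorem \ref{theoball2}, but along the ``disk directions'' $x_1,\dots,x_{k+1}$ only, rather than the full position vector. Put $\rho^2:=\sum_{i=1}^{k+1}x_i^2$, so the hypothesis $\Sigma\subset\overline{B^{k+1}(r)}\times\mathbb R^q$ reads $\rho\le r$ on $\Sigma$, and set $u:=\tfrac12\sum_{i=1}^{k+1}x_i^2=\tfrac12\rho^2$. Summing the per-coordinate weighted Laplacians $\Delta_f\frac{x_i^2}{2}$ of Lemma \ref{form} over $i=1,\dots,k+1$ (the partial version of the computation that produces (\ref{7}), so that the quadratic term $-\rho^2$ is the partial sum of the $-|X|^2$ appearing there) yields
$$\Delta_f u=\sum_{i=1}^{k+1}|e_i^T|^2+\sum_{i=1}^{k+1}\lambda_i x_i-\rho^2.$$
Since $\rho\le r$ is bounded on $\Sigma$, the function $u$ is bounded and $\Delta_\Sigma u$ grows at most linearly in $|X|$, so $u$ satisfies the hypothesis of Theorem \ref{basis} and hence $\int_\Sigma e^{-|X|^2/2}\,\Delta_f u\,dV=0$.

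The heart of the proof is to convert the identity above into a lower bound that is quadratic in $\rho$. Two elementary estimates do this. First, because $\sum_{i=1}^{n+p}|e_i^T|^2=n$ (the dimension of $\Sigma$) and each of the $q=n+p-k-1$ remaining terms satisfies $|e_i^T|^2\le 1$, we get $\sum_{i=1}^{k+1}|e_i^T|^2\ge n-q=k+1-p$. Second, writing $\lambda_i=\langle\mathbf H_f,e_i\rangle$ and using $\sum_{i=1}^{k+1}\lambda_i^2\le\sum_{i=1}^{n+p}\lambda_i^2=|\mathbf H_f|^2=\lambda^2$, Cauchy--Schwarz gives $\sum_{i=1}^{k+1}\lambda_i x_i\ge-\lambda\rho$. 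Combining, $\Delta_f u\ge(n-q)-\lambda\rho-\rho^2$. Now the quadratic $t^2+\lambda t-(n-q)$ has unique nonnegative root $r=\tfrac12\bigl(-\lambda+\sqrt{\lambda^2+4(n-q)}\bigr)$ — exactly the radius in the statement — and $(n-q)-\lambda\rho-\rho^2=-(\rho-r)(\rho-r_-)$ with $r_-<0$. On $\Sigma$ we have $0\le\rho\le r$, so $(\rho-r_-)>0$ and $(\rho-r)\le0$, whence $(n-q)-\lambda\rho-\rho^2\ge0$ and therefore $\Delta_f u\ge0$. Since its integral against the strictly positive weight $e^{-|X|^2/2}$ vanishes, $\Delta_f u\equiv0$; feeding this back into $0=\Delta_f u\ge(n-q)-\lambda\rho-\rho^2\ge0$ forces $(n-q)-\lambda\rho-\rho^2=0$, i.e.\ $\rho\equiv r$. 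Thus $\sum_{i=1}^{k+1}x_i^2\equiv r^2$ on $\Sigma$, which is precisely $\Sigma\subset S^k(r)\times\mathbb R^q$.

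The computations are routine; the only place requiring care is the partial-sum estimate $\sum_{i=1}^{k+1}|e_i^T|^2\ge n-q$, and this is exactly where the restriction $k\in\{p,\dots,n+p-2\}$ enters. It guarantees $n-q=k+1-p\ge1>0$, so that the threshold $r$ is real and strictly positive and the forcing term $(n-q)-\lambda\rho-\rho^2$ is genuine rather than vacuous; without $k\ge p$ the radius degenerates and the argument collapses. I would also verify the growth bound needed for Theorem \ref{basis}, but since $\rho\le r$ makes $u$ bounded this is immediate, so I expect the tangential-norm estimate and the matching of $r$ to the correct root to be the substantive steps.
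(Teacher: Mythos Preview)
Your proof is correct and follows essentially the same approach as the paper. Both arguments rest on the identical ingredients: the formulas of Lemma~\ref{form}, the divergence theorem (Theorem~\ref{basis}), the tangential-norm estimate $\sum_{i=k+2}^{n+p}|e_i^T|^2\le q$ (equivalently your $\sum_{i=1}^{k+1}|e_i^T|^2\ge n-q$), and Cauchy--Schwarz on $\sum\lambda_i x_i$. The only difference is packaging: you apply Theorem~\ref{basis} once to $u=\tfrac12\rho^2$ after establishing the pointwise bound $\Delta_f u\ge 0$, whereas the paper applies it separately to $\tfrac{|X|^2}{2}$ and to each $\tfrac{x_i^2}{2}$ and then combines the resulting integral identities; since $u=\tfrac{|X|^2}{2}-\sum_{i=k+2}^{n+p}\tfrac{x_i^2}{2}$, these are the same computation reorganized.
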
 
\begin{proof}
From (\ref{8}), we can see that the function  $\frac{x_i^2}2$ satisfies the condition in Theorem \ref{basis}. Therefore,
\begin{equation}\label{141}
\int_\Sigma e^{-\frac{|X|^2}2}(x_i^2-\lambda_ix_i)dV=2\int_\Sigma e^{-\frac{|X|^2}2}|e_i^T|^2dV.
 \end{equation}

Suppose that  $\{e_1, e_2, \ldots, e_{k+1}\}\subset\mathbb R^{k+1}$ and $\{e_{k+2}, e_{k+3}, \ldots, e_{n+p}\}\subset\mathbb R^{q}.$
Write $X=(u, v),$ where $u\in \mathbb R^{k+1}, v\in\mathbb R^q.$

Combining  (\ref{14}) and (\ref{141}), we get
\begin{align*}
0&\le \int_\Sigma e^{-\frac{|X|^2}2}\left[q-\sum_{i=k+2}^{n+p}  |e_i^T|^2\right]dV\\
&=\int_\Sigma e^{-\frac{|X|^2}2}\left[q+|X|^2-\sum_{i=1}^{n+p}\lambda_ix_i-n+\sum_{i=k+2}^{n+p}\lambda_ix_i-\sum_{i=k+2}^{n+p} x_i^2\right]dV\\
&=\int_\Sigma e^{-\frac{|X|^2}2}\left[\sum_{i=1}^{k+1} x_i^2-\sum_{i=1}^{k+1}\lambda_ix_i-n+q\right]dV\\
&\le\int_\Sigma e^{-\frac{|X|^2}2}\left[|u|^2+\lambda|u|-(n-q)\right]dV.\\
\end{align*}
The assumption that $\Sigma\subset \overline{B^{k+1}(r)}\times \mathbb R^q,$  means
$|u|\le r,$ i,e. $|u|^2+\lambda|u|-(n+q)\le 0.$ It implies that
$|u|=r,$
 i.e. $\Sigma\subset S^{k}(r)\times \mathbb R^q .$ 
 
\end{proof}
\begin{rem}
 \begin{enumerate}
 \item We see in the above proof that  $e_i^\perp=0,$ i.e. $e_i=e_i^T, i=k+2,\ldots, n+p.$ Therefore, $\Sigma=\Gamma\times \mathbb R^q,$ where $\Gamma\subset S^{k}(r)$ is an $(n-q)$-dimensional $\lambda$-submanifold, i.e. an $(n-q)$-dimensional  CMC submanifold of $S^{k}(r).$ 

\item If $\lambda=0,$ this is Theorem 14 in \cite{hidu}.
\item If $p=1,$ we obtain Theorem 1.5 in \cite{caes}.
  \end{enumerate}
 \end{rem}
If we use the inequality
\begin{align*}
0&\ge-\int_\Sigma e^{-\frac{|X|^2}2}\sum_{i=k+2}^{n+p}  |e_i^T|^2dV\\
&=\int_\Sigma e^{-\frac{|X|^2}2}\left[|X|^2-n-\sum_{i=1}^{n+p}\lambda_ix_i+\sum_{i=k+2}^{n+p}\lambda_ix_i-\sum_{i=k+2}^{n+p} x_i^2\right]dV\\
&\ge\int_\Sigma e^{-\frac{|X|^2}2  }\left[|u|^2-\lambda|u|-n\right]dV,\\
\end{align*}
then by the same arguments as in the above proof, we have
 
\begin{theorem}
Let $k \in \{1, 2,. . . , n+p-2\}.$  If $\Sigma\subset\overline{E^{k+1}(r)}\times \mathbb R^{n+p-k-1},$ where $\displaystyle r=\frac{\lambda+\sqrt{\lambda^2+4n}}2,$ then $\Sigma\subset S^{k}(r)\times \mathbb R^{n+p-k-1}.$
\end{theorem}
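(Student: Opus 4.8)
The plan is to mimic the proof of the preceding theorem, replacing the interior (ball) hypothesis by its exterior counterpart and using the inequality chain displayed just above the statement. First I would fix an adapted orthonormal frame, assuming $\{e_1,\dots,e_{k+1}\}$ span the first factor $\mathbb R^{k+1}$ and $\{e_{k+2},\dots,e_{n+p}\}$ span the second factor $\mathbb R^{n+p-k-1}$, and write the position vector as $X=(u,v)$ with $u\in\mathbb R^{k+1}$ and $v\in\mathbb R^{n+p-k-1}$, so that $|u|^2=\sum_{i=1}^{k+1}x_i^2$ and $|X|^2-\sum_{i=k+2}^{n+p}x_i^2=|u|^2$.

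Next I would record the two ingredients furnished by the divergence theorem. Applying Theorem \ref{basis} to $\tfrac{|X|^2}2$ (legitimate by (\ref{6}), whose right-hand side is bounded by $C|X|^2$ since $|\langle{\bf H}_f,X\rangle|\le\lambda|X|$ and $|X^\perp|^2\le|X|^2$) yields the identity (\ref{14}); applying it to each $\tfrac{x_i^2}2$, $i=k+2,\dots,n+p$ (legitimate by (\ref{8}), since $|e_i^T|^2,|e_i^\perp|^2\le 1$ and $|\lambda_i|\le\lambda$ force at most quadratic growth), yields (\ref{141}). Summing the latter over $i$ and combining with (\ref{14}) produces exactly the chain displayed above the statement, whose first line is nonpositive because $\sum_{i=k+2}^{n+p}|e_i^T|^2\ge 0$. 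The only non-routine manipulation is the last inequality in that chain: after using (\ref{14}) to eliminate the full $|X|^2$ and $\sum_{i=1}^{n+p}\lambda_i x_i$ terms, the remaining tangential contribution reduces to $|u|^2-n-\sum_{i=1}^{k+1}\lambda_i x_i$, and I would bound $\sum_{i=1}^{k+1}\lambda_i x_i=\langle{\bf H}_f,u\rangle$ from above by $|{\bf H}_f|\,|u|=\lambda|u|$ by Cauchy--Schwarz; this is precisely what converts the identity into
$$
0\ge\int_\Sigma e^{-\frac{|X|^2}2}\left[\,|u|^2-\lambda|u|-n\,\right]dV.
$$

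Finally I would feed in the hypothesis. Since $r=\frac{\lambda+\sqrt{\lambda^2+4n}}2$ is the positive root of $t^2-\lambda t-n=0$, the containment $\Sigma\subset\overline{E^{k+1}(r)}\times\mathbb R^{n+p-k-1}$ means $|u|\ge r$ on $\Sigma$, so the integrand $|u|^2-\lambda|u|-n\ge 0$ everywhere. Together with the displayed inequality this forces $\int_\Sigma e^{-|X|^2/2}(|u|^2-\lambda|u|-n)\,dV=0$, and as the integrand is continuous and nonnegative it must vanish identically; hence $|u|\equiv r$ and $\Sigma\subset S^{k}(r)\times\mathbb R^{n+p-k-1}$.

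I expect the only point needing genuine care to be the Cauchy--Schwarz step together with the bookkeeping that isolates the leftover terms as $|u|^2-\lambda|u|-n$ with the correct sign: every inequality in the chain must point the right way so that the exterior hypothesis (rather than the interior one of the previous theorem, which produced $+\lambda|u|$ and the root in $n-q$) pins $|u|$ to the larger root $r$. Verifying the growth condition of Theorem \ref{basis} for $\tfrac{x_i^2}2$ and $\tfrac{|X|^2}2$ is routine for the reasons noted above.
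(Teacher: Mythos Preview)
Your proposal is correct and follows essentially the same route as the paper: the paper also starts from $0\ge-\int_\Sigma e^{-|X|^2/2}\sum_{i=k+2}^{n+p}|e_i^T|^2\,dV$, rewrites the integrand via the identities coming from Theorem~\ref{basis} applied to $\tfrac{|X|^2}{2}$ and to the $\tfrac{x_i^2}{2}$, reduces the bracket to $|u|^2-n-\sum_{i=1}^{k+1}\lambda_i x_i$, and then applies Cauchy--Schwarz exactly as you describe to reach $\int e^{-|X|^2/2}(|u|^2-\lambda|u|-n)\,dV\le 0$. Your bookkeeping of signs and your identification of $r$ as the positive root of $t^2-\lambda t-n=0$ are the same as in the paper.
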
 

\section{Rigidity  and gap results } 
In this section $\Sigma^n \subset \mathbb G^{n+p}, p\ge 1,$ is an $n$-dimensional complete proper $\lambda$-submanifold without
boundary and  $\lambda>0.$

We choose a local orthonormal frame field $\{e_A\}_{A=1}^{n+p}$ in $\mathbb G^{n+p}$ with
dual coframe field $\{\omega_A\}_{A=1}^{n+p},$ such that when restricted to $\Sigma, e_1, \ldots , e_n$ are tangent to $\Sigma$ and $e_{n+p}=\frac{\bf H_f}{|{\bf H_f}|}.$  The indices will be used in the paper as follow:

$$1 \le A, B, C, D \le n+p, \ \ 1 \le i, j, k, l \le n,\ \  n + 1 \le \alpha, \beta, \gamma \le n+p.$$ Denoted by
\begin{enumerate}
\item $A =\sum_{\alpha,i, j}h^\alpha_{ij}\omega_i\otimes\omega_j\otimes e_\alpha,$ the second fundamental form;
\item $A^\alpha =\sum_{i,j}h^\alpha_{ij}\omega_i\otimes\omega_j\otimes e_\alpha,$ the second fundamental form corresponding to $e_\alpha;$
\item $H^\alpha=\sum_i h_{ii}^\alpha,$ the mean curvature corresponding to $e_\alpha;$
\item ${\bf H} = \sum_\alpha H^\alpha e_\alpha = \sum_a\left(\sum_i h_{ii}^\alpha\right) e_\alpha,$ the
mean curvature vector field of $\Sigma; $
\item $|A^\alpha|^2=\sum_{i,j}(h_{ij}^\alpha)^2,$  the squared norm of $A^\alpha;$
\item $|A|^2=\sum_{\alpha,i,j}(h_{ij}^\alpha)^2,$  the squared norm of $A;$
\item  $h^\alpha_{ijk}=\nabla_kh^\alpha_{ij}, \ \ h^\alpha_{ijkl}=\nabla_l\nabla_k h^\alpha_{ij},$ where $\nabla$ is the Levi-Civita connection on $\Sigma.$
\end{enumerate}
We have known that (see, e.g., \cite{lihai}), \cite{cali}, \cite{chpe}, \cite{si})

$$h_{ij}^\alpha = h_{ji}^\alpha.$$
 
 $$h_{ijk}= h_{ikj}.$$
The latter equality is the Codazzi equation.

First, we establish some Simon's type identities for $\lambda$-submanifolds that we need for proving  gap theorems.

\begin{lem}\label{lem15} \begin{enumerate}
 \item \begin{equation}
\Delta H^2=2|\nabla H|^2+2 H^2-2\sum_{\alpha,\beta,i,k}H^\alpha (H^\beta-\lambda_\beta) h^\alpha_{ik} h^\beta_{ik}+2\sum_{\alpha,k}H^\alpha H^\alpha_{,k}\langle X, e_k\rangle.
  \end{equation}
\item \begin{equation}\label{lem1.2}
\Delta_fH^2 =2|\nabla H|^2+2H^2-2\sum_{\alpha,\beta}H^\alpha (H^\beta-\lambda_\beta)\langle A^\alpha, A^\beta\rangle.
\end{equation}
 \item \begin{align}
\Delta|A|^2 &= 2 |\nabla A|^2 + 	\langle {\bf H}_{,ij}, A_{ij}\rangle + \langle A_{ik}, {\bf H}\rangle\langle A_{il}, A_{kl}\rangle-\sum_{\alpha\ne\beta}|[A^{e_\alpha}, A^{e_\beta}]|^2-\sum_{\alpha,\beta}S^2_{\alpha\beta}.
\end{align}

\item \begin{equation}\label{44}
\Delta_f|A|^2 = 2|\nabla A|^2 +2|A|^2 +2\langle   {\bf H_f}, A_{ik}\rangle A_{jk}, A_{ij}\rangle - 2 \sum_{\alpha\ne\beta}|[A^{e_\alpha}, A^{e_\beta}]|2 - 2 \sum_{\alpha,\beta}S^2_{\alpha\beta}.
\end{equation}

\end{enumerate}
\end{lem}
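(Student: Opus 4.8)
The plan is to treat the four identities as one package. Parts (1) and (3) are the ordinary Bochner--Simons formulas for $H^2=|{\bf H}|^2=\sum_\alpha (H^\alpha)^2$ and $|A|^2=\sum_{\alpha,i,j}(h^\alpha_{ij})^2$ computed with the intrinsic Laplacian $\Delta_\Sigma$ in the flat ambient space $\mathbb G^{n+p}$, whereas (2) and (4) are their weighted counterparts, obtained by subtracting the drift term via $\Delta_f=\Delta_\Sigma-\langle X,\nabla_\Sigma\,\cdot\,\rangle$. In fact (2) follows from (1) with no extra work: the last summand of (1), namely $2\sum_{\alpha,k}H^\alpha H^\alpha_{,k}\langle X,e_k\rangle$, is exactly $\langle X,\nabla_\Sigma H^2\rangle$, and $\sum_{i,k}h^\alpha_{ik}h^\beta_{ik}=\langle A^\alpha,A^\beta\rangle$, so removing that term turns (1) into (2) verbatim. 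Thus the real work is in (1) and (3), plus a genuine computation to pass from (3) to (4).

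First I would record the first-order consequences of the structure equations, in the spirit of the derivation of Lemma \ref{form} but applied to $H^\alpha$ rather than to coordinate functions. Writing $\overline{\nabla}$ for the flat connection of $\mathbb G^{n+p}$ and using $\overline{\nabla}_vX=v$, the Gauss and Weingarten formulas give the tangential and normal covariant derivatives of $H^\alpha$ and of $\langle X,e_\alpha\rangle$. The decisive input is the $\lambda$-submanifold equation, which in the chosen frame reads $X^\perp={\bf H_f}-{\bf H}$, equivalently $\langle X,e_\alpha\rangle=\lambda_\alpha-H^\alpha$ with $\lambda_\alpha=\langle {\bf H_f},e_\alpha\rangle$; differentiating this relation is what converts position-vector terms into curvature terms and, upon a second differentiation, produces the self-terms $+H^\alpha$ and $+h^\alpha_{ij}$ that ultimately become the $+2H^2$ and $+2|A|^2$ in (2) and (4). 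Here the choice $e_{n+p}={\bf H_f}/|{\bf H_f}|$ is legitimate because $\lambda>0$ is assumed throughout this section, so ${\bf H_f}$ never vanishes and the frame is smooth.

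Next I would establish the Simons identity in arbitrary codimension: using Codazzi ($h^\alpha_{ijk}=h^\alpha_{ikj}$) to commute the first two covariant derivatives and the Ricci identity to commute the last two, one rewrites $\Delta h^\alpha_{ij}=\sum_k h^\alpha_{ijkk}$ as $\nabla^2_{ij}H^\alpha$ plus intrinsic and normal curvature terms, which the Gauss and Ricci equations express purely through the second fundamental form. This is precisely where the commutator blocks $\sum_{\alpha\ne\beta}|[A^{e_\alpha},A^{e_\beta}]|^2$ and $\sum_{\alpha,\beta}S^2_{\alpha\beta}$ enter. Then $\Delta|A|^2=2\langle A,\Delta A\rangle+2|\nabla A|^2$ yields (3), while $\Delta H^2=2\sum_\alpha H^\alpha\Delta H^\alpha+2|\nabla H|^2$ combined with the trace of Simons yields (1), with the drift term already isolated as its final summand.

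Finally, to obtain (4) I would compute the drift $\langle X,\nabla_\Sigma|A|^2\rangle$ and feed in $\langle X,e_\alpha\rangle=\lambda_\alpha-H^\alpha$ once more: the term $\langle {\bf H}_{,ij},A_{ij}\rangle=\sum_{\alpha,i,j}H^\alpha_{,ij}h^\alpha_{ij}$ of (3), reorganized through the twice-differentiated $\lambda$-equation (so that $H^\alpha_{,ij}$ is replaced by the corresponding derivatives of $-\langle X,e_\alpha\rangle$), combines with the drift to produce the clean term $2|A|^2$ and the cubic term $2\langle {\bf H_f},A_{ik}\rangle\langle A_{jk},A_{ij}\rangle$, the remaining quadratic commutator blocks passing through unchanged. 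The main obstacle is the higher-codimension bookkeeping: one must carry the normal connection $\nabla^\perp$ through every differentiation and collect the Ricci-equation contributions with correct signs, since, unlike the hypersurface case $p=1$, the blocks $\sum_{\alpha\ne\beta}|[A^{e_\alpha},A^{e_\beta}]|^2$ and $\sum_{\alpha,\beta}S^2_{\alpha\beta}$ no longer vanish and must be tracked term by term.
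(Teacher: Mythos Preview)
Your plan is correct and follows essentially the same route as the paper: differentiate the $\lambda$-equation $H^\alpha=\lambda_\alpha-\langle X,e_\alpha\rangle$ once to get $H^\alpha_{,i}=\sum_k h^\alpha_{ik}\langle X,e_k\rangle$ and again to get $H^\alpha_{,ij}$, plug into $\Delta H^2=2|\nabla H|^2+2\sum_\alpha H^\alpha H^\alpha_{,ii}$ for (1), then strip the drift for (2); use the Simons formula for $(\Delta A)_{ij}$ to obtain (3), and substitute the expression for ${\bf H}_{,ij}$ together with the drift to reach (4). The only cosmetic difference is that the paper imports the Simons identity $(\Delta A)_{ij}={\bf H}_{,ij}+\langle A_{ik},{\bf H}\rangle A_{jk}-\langle A_{ij},A_{kl}\rangle A_{kl}+2\langle A_{il},A_{jk}\rangle A_{kl}-\langle A_{jk},A_{kl}\rangle A_{il}-\langle A_{ik},A_{kl}\rangle A_{jl}$ directly from Xin's Proposition~2.1 rather than re-deriving it via Codazzi and the Ricci identity, and it obtains (1) purely from the twice-differentiated $\lambda$-equation without invoking a ``trace of Simons'' (your paragraph describing that direct differentiation already contains the whole argument, so the later phrase is redundant).
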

\begin{proof}
\begin{enumerate}
\item
Since ${\bf H_f}={\bf H}+X^\perp,$ 
\begin{equation}\label{19}
H^\alpha=-\langle X, e_\alpha\rangle+\lambda_\alpha,
\end{equation}
where $\lambda_\alpha=0, \ \alpha\ne n+p$ and $\lambda_{n+p}=\lambda$ (because $e_{n+p}=\frac{\bf H_f}{|{\bf H_f}|}).$

Taking covariant derivative of (\ref{19}) with respect to $e_i$ 
\begin{equation}\label{20}H^\alpha_{,i}= \sum_k h^\alpha_{ik}\langle X, e_k\rangle,\end{equation}
and taking covariant derivative of (\ref{20}) with respect to $e_j$ 
\begin{align}\label{47}
H^\alpha_{,ij} &=\sum_k h^\alpha_{ikj}\langle X, e_k\rangle+ h^\alpha_{ij}+\sum_{\beta, k} h^\alpha_{ik} h^\beta_{kj}\langle X, e_\beta\rangle\nonumber\\ 
&=\sum_k h^\alpha_{ijk}\langle X, e_k\rangle+ h^\alpha_{ij}+\sum_{\beta, k} (-H^\beta+\lambda_\beta ) h^\alpha_{ik} h^\beta_{kj},
\end{align}
we have 
\begin{align*}
\Delta H^2&=  2|\nabla H|^2 +2\sum_{\alpha,i}H^\alpha H^\alpha_{,ii}\\ 
&=2|\nabla H|^2 +2\sum_{\alpha}H^\alpha\left(\sum_{k}H^\alpha_{,k}\langle X, e_k\rangle+H^\alpha+\sum_{\beta,i,k} (-H^\beta+\lambda_\beta) h^\alpha_{ik} h^\beta_{ki}\right)\\
&=2|\nabla H|^2+2 H^2-2\sum_{\alpha,\beta,i,k}H^\alpha (H^\beta-\lambda_\beta) h^\alpha_{ik} h^\beta_{ik}+2\sum_{\alpha,k}H^\alpha H^\alpha_{,k}\langle X, e_k\rangle.\\
\end{align*}
\item
\begin{align*}
\Delta_f H^2 &= \Delta H^2 - \langle X, \nabla H^2\rangle\\
&= \Delta H^2-2\sum_{\alpha,k}H^\alpha H^\alpha_{,k}\langle X, e_k\rangle\\
&=2|\nabla H|^2+2H^2-2\sum_{\alpha,\beta,i,k}H^\alpha (H^\beta-\lambda_\beta) h^\alpha_{ik} h^\beta_{ik}\\
&=2|\nabla H|^2+2H^2-2\sum_{\alpha,\beta}H^\alpha (H^\beta-\lambda_\beta)\langle A^\alpha, A^\beta\rangle.
\end{align*}
\item To keep the formulas short,  summation convention is used in the proof. By Proposition 2.1 in \cite{xi1}
\begin{align*}
(\Delta A)_{ij} = &{\bf H}_{,ij} + \langle A_{ik}, {\bf H}\rangle A_{jk} -\langle A_{ij}, A_{kl}\rangle A_{kl} + 2\langle A_{il},A_{jk}\rangle A_{kl}\\
&-\langle 	A_{jk}, A_{kl}\rangle A_{il}- \langle A_{ik}, A_{kl}\rangle A_{jl}. 
\end{align*}
Denote
$$A_{ij} = (\overline \nabla _{e_i}e_j)^\perp = h^\alpha_{ij}e_\alpha,$$
and let  $S_{\alpha\beta} =h^\alpha_{ij}h^\beta_{ij}.$ Then $|A|^2 = 
\sum_\alpha S_{\alpha\alpha}.$

We  have
\begin{align*}
\langle\Delta A, A \rangle &= \langle {\bf H}_{,ij}, A_{ij}\rangle + \langle A_{ik}, {\bf H}\rangle \langle A_{jk},A_{ij}\rangle -\langle A_{ij}, A_{kl}\rangle \langle A_{kl}, A_{ij}\rangle \\
&+ 2\langle A_{il},A_{jk}\rangle \langle A_{kl}, A_{ij}\rangle-\langle 	A_{jk}, A_{kl}\rangle\langle A_{il}, A_{ij}\rangle- \langle A_{ik}, A_{kl}\rangle\langle A_{jl},A_{ij}\rangle\\
&= \langle {\bf H}_{,ij}, A_{ij}\rangle + \langle A_{ik}, {\bf H}\rangle \langle A_{jk},A_{ij}\rangle -\langle A_{ij}, A_{kl}\rangle \langle A_{kl}, A_{ij}\rangle \\
&+ 2\langle A_{il},A_{jk}\rangle \langle A_{kl}, A_{ij}\rangle-2\langle 	A_{jk}, A_{kl}\rangle\langle A_{il}, A_{ij}\rangle.
\end{align*}
Noting
\begin{align*}
&\langle A_{ij}, A_{kl}\rangle \langle A_{kl}, A_{ij}\rangle = h^\alpha_{kl}h^\alpha_{ij}h^\beta_{ij}h^\beta_{kl} =  \sum_{\alpha, \beta}S^2_{\alpha\beta},\\
&2\langle A_{il},A_{jk}\rangle \langle A_{kl}, A_{ij}\rangle
- 2\langle 	A_{jk}, A_{kl}\rangle\langle A_{il}, A_{ij}\rangle\\
&= 2 \sum_{\alpha\ne\beta}( A^{e_\beta}A^{e_\alpha}, A^{e_\alpha}A^{e_\beta} 
- 2 A^{e_\beta}A^{e_\alpha}, A^{e_\beta}A^{e_\alpha} ) =\sum_{\alpha\ne\beta}|[A^{e_\alpha}, A^{e_\beta}]|^2.
\end{align*}
Thus,
\begin{align*}
\langle\Delta A, A \rangle &= 	\langle {\bf H}_{,ij}, A_{ij}\rangle + \langle A_{ik}, {\bf H}\rangle\langle A_{il}, A_{kl}\rangle-\sum_{\alpha\ne\beta}|[A^{e_\alpha}, A^{e_\beta}]|^2-\sum_{\alpha,\beta}S^2_{\alpha\beta}.
\end{align*}
Therefore,
\begin{align}\label{22}
\Delta|A|^2 &=  |\nabla A|^2 +2\langle\Delta A, A \rangle \nonumber\\
&=2 |\nabla A|^2 + 	2\langle {\bf H}_{,ij}, A_{ij}\rangle + 2\langle A_{ik}, {\bf H}\rangle\langle A_{il}, A_{kl}\rangle-2\sum_{\alpha\ne\beta}|[A^{e_\alpha}, A^{e_\beta}]|^2-2\sum_{\alpha,\beta}S^2_{\alpha\beta}.
\end{align}
\item The summation convention is still used in the proof. From (\ref{19})
\begin{equation*}
H_{,j} = \langle 
X, e_k\rangle A_{j k}.
\end{equation*}
\begin{align}\label{23}
{\bf H}_{,ij} &= A_{ij} +\langle X, A_{ik}\rangle A_{jk}+ \langle X, e_k\rangle A_{jki}\nonumber\\ 
&= A_{ij} +\langle {\bf H_f}-{\bf H}, A_{ik}\rangle A_{jk}+ \langle X, e_k\rangle A_{ijk}
 \end{align}
 
 (\ref{22}) and (\ref{23}) yield
 \begin{align*}
\Delta_f|A|^2 &= 2 |\nabla A|^2 + 2|A|^2+	 2\langle {\bf H_f}, A_{ik}  \rangle\langle A_{il}, A_{kl}\rangle\nonumber\\
&-2\sum_{\alpha\ne\beta}|[A^{\alpha}, A^{\beta}]|^2-2\sum_{\alpha,\beta}S^2_{\alpha\beta}.
\end{align*}

\end{enumerate}
\end{proof}
\begin{theorem}
If  $|A|$ is bounded and satisfies
\begin{equation}\label{24}
|A|^2{|{\bf H}-{\bf H_f}|}\le{H},
\end{equation} 
then $\Sigma$ is one of the following:
\begin{enumerate}
\item  an $n$-plane $P$ with $d(O, P)=\lambda$ (see Example \ref{pla});
\item a round sphere $S^n(\sqrt r)\subset P,$ where $P$ is an $(n+1)$-plane, $d(O,P)=h<\lambda,$  $\displaystyle r=\frac{\pm \mu+\sqrt{\mu^2+4n}}2$ and $\mu =\sqrt{\lambda^2-h^2}$  (see Example \ref{sph} and \ref{sph1});
\item  a cylinder $S^k(\sqrt r) \times P, 1 \le k \le n,$ where $P$ is an $(n-k)$- linear subspace and $\displaystyle r=\frac {\pm \lambda+\sqrt{\lambda^2+4k}}2$ (see Example \ref{cyl}).
\end{enumerate}
\end{theorem}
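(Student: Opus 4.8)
The plan is to run the Simons-type identities of Lemma \ref{lem15} through the divergence theorem, Theorem \ref{basis}. First I would apply Theorem \ref{basis} both to $u=|A|^2$ and to $u=H^2$. Since $|A|$ is bounded and $\Sigma$ is proper (hence has polynomial volume growth), the non-gradient parts of $\Delta_\Sigma|A|^2$ and $\Delta_\Sigma H^2$ are controlled; the one point I would have to check carefully is the growth hypothesis $|\Delta_\Sigma u|\le C|X|^{d}$ of Theorem \ref{basis}, where the gradient term $|\nabla A|^2$ must be shown to grow at most polynomially (or the identity must be obtained by a cut-off approximation). Granting this, Theorem \ref{basis} yields
$$\int_\Sigma \Delta_f|A|^2\, e^{-\frac{|X|^2}2}\,dV=0,\qquad \int_\Sigma \Delta_f H^2\, e^{-\frac{|X|^2}2}\,dV=0.$$

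Substituting (\ref{44}) and (\ref{lem1.2}) turns these into two integral identities. The key structural remark is that the hypothesis (\ref{24}) sees the quantity $H=|{\bf H}|$, which enters the reaction term of $\Delta_fH^2$ but \emph{not} that of $\Delta_f|A|^2$; so I would not work with $|A|^2$ alone but with a suitable linear combination --- most naturally the trace-free quantity $|A|^2-\tfrac1nH^2$, or a constant multiple tuned to the pinching --- so as to produce a single integrand of the form $2|\nabla A|^2+R$ (plus a gradient remainder) whose reaction part $R$ can be attacked by (\ref{24}). Using $e_{n+p}={\bf H_f}/\lambda$ one has $\langle{\bf H_f},A_{ik}\rangle=\lambda h^{n+p}_{ik}$, and writing ${\bf H_f}={\bf H}+X^\perp$ splits every cubic curvature term into an ${\bf H}$-part and an $X^\perp$-part.

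The crux is the pointwise inequality $R\le 0$. For the $X^\perp$-parts I would estimate by Cauchy--Schwarz, $|\langle X^\perp,A_{ik}\rangle\langle A_{il},A_{kl}\rangle|\le|X^\perp|\,|A|^3$, and then invoke $|A|^2|X^\perp|=|A|^2|{\bf H}-{\bf H_f}|\le H$ from (\ref{24}) to trade a power of $|A|$ for $H$; the intrinsic quartic terms $\sum_{\alpha,\beta}S^2_{\alpha\beta}$ and $\sum_{\alpha\ne\beta}|[A^{e_\alpha},A^{e_\beta}]|^2$ would be controlled by the standard Chern--do Carmo--Kobayashi / Li--Li algebraic relations to $|A|^4$, while the ${\bf H}$-parts are meant to be absorbed against the $\tfrac1nH^2$-terms coming from (\ref{lem1.2}). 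The aim of this bookkeeping is to reduce $R\le 0$ to an elementary polynomial inequality in $|A|$ and $H$ that (\ref{24}) guarantees, with equality forcing both $\nabla A\equiv0$ and $R\equiv0$ (here the weighted integral of $2|\nabla A|^2+R\le0$ being zero does the work).

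Once $\nabla A\equiv0$, the submanifold $\Sigma$ has parallel second fundamental form and all intermediate inequalities become equalities; in particular equality in the Li--Li inequality forces the shape operators $A^{e_\alpha}$ to commute and to be concentrated in the distinguished normal direction $e_{n+p}={\bf H_f}/\lambda$, so that $\Sigma$ is, after splitting off a flat factor, hypersurface-like. Feeding this into the defining equation ${\bf H}+X^\perp=\lambda e_{n+p}$ and using the classification of complete properly immersed submanifolds with parallel second fundamental form, I would conclude that $\Sigma$ is an $n$-plane, a round sphere, or a cylinder, and then fix the radius and the distance $d(O,P)$ by direct substitution, reproducing exactly Examples \ref{pla}, \ref{sph}, \ref{sph1} and \ref{cyl}. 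The main obstacle is the crux inequality of the previous paragraph: in codimension $p\ge2$ the cubic term couples $e_{n+p}$ to all of the $A^{e_\alpha}$, and the commutator terms $\sum_{\alpha\ne\beta}|[A^{e_\alpha},A^{e_\beta}]|^2$ (which vanish when $p=1$) must be dominated; making the constants align so that equality holds precisely on planes, spheres and cylinders is the delicate step that distinguishes this higher-codimension result from the hypersurface theorems of Wei--Peng and Guang.
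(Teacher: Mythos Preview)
Your overall architecture---integrate a Simons-type identity against the Gaussian, obtain a pointwise reaction inequality, use the equality case to force $\nabla A\equiv0$, then classify---matches the paper in spirit, but the actual route is different and in fact considerably more direct than what you outline.

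The paper does \emph{not} integrate $\Delta_f|A|^2$ at all in the first step; it uses only the $H^2$ identity (\ref{lem1.2}). The point is that the reaction term already \emph{factorises}: writing
\[
\sum_{\alpha,\beta}H^\alpha(H^\beta-\lambda_\beta)\langle A^\alpha,A^\beta\rangle
=\Big\langle \sum_\alpha H^\alpha A^\alpha,\ \sum_\beta (H^\beta-\lambda_\beta)A^\beta\Big\rangle_F,
\]
a double Cauchy--Schwarz (first in the Frobenius inner product, then in the normal index $\alpha$) bounds it by $H\,|A|\cdot|{\bf H}-{\bf H_f}|\,|A|=H|A|^2|{\bf H}-{\bf H_f}|$, which is exactly the right-hand side of (\ref{24}). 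Integrating $\Delta_fH^2$ then forces $\nabla H\equiv0$ and equality throughout. The equality case of Cauchy--Schwarz says that the vectors $(H^\alpha)$, $(H^\alpha-\lambda_\alpha)$ and the family $(A^\alpha)$ are mutually proportional; since $\lambda_\alpha=0$ for $\alpha\neq n+p$, this immediately gives $A^\alpha=0$ for $\alpha\neq n+p$. Yau's theorem then places $\Sigma$ in an $(n+1)$-plane, after which a short pointwise computation with (\ref{47}) shows $\langle\Delta A,A\rangle=0$; combined with $|A|=\mathrm{const}$ this yields $\nabla A\equiv0$, and Lawson's classification finishes. No Li--Li or Chern--do Carmo--Kobayashi inequality is needed anywhere.

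By contrast, your plan brings in $\Delta_f|A|^2$ and the quartic commutator terms, and then has to hope that a linear combination like $|A|^2-\tfrac1nH^2$ makes ``the constants align''. There are two concrete obstacles. First, applying Theorem \ref{basis} to $|A|^2$ requires polynomial control of $\Delta_\Sigma|A|^2$, hence of $|\nabla A|^2$, which is not available a priori; the paper sidesteps this entirely because $\Delta_\Sigma H^2$ is visibly polynomially bounded once $|A|$ is bounded. Second, the hypothesis (\ref{24}) gives no uniform upper bound on $|A|$, so the Li--Li estimate $\sum|[A^\alpha,A^\beta]|^2+\sum S_{\alpha\beta}^2\le\frac32|A|^4$ cannot be absorbed by the linear term $2|A|^2$; your ``crux inequality'' $R\le0$ genuinely does not close in the form you sketch. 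The decisive idea you are missing is that the reaction term of $\Delta_fH^2$ alone already matches (\ref{24}) \emph{exactly}, and that its equality case does all the codimension reduction for free.
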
 
\begin{proof}
Since $|A|$ is bounded,  $h_{ij}^\alpha, |H|$ are bounded, too.
It follows that $H^2$ satisfies the condition in Theorem \ref{basis}, therefore
$$\int_\Sigma \Delta_fH^2 dV=0.$$
We have
\begin{align}\label{25}
0&=\int_\Sigma \Delta_fH^2 dV\nonumber\\
 &=\int_\Sigma (2|\nabla H|^2+2H^2-2\sum_{\alpha,\beta}H^\alpha (H^\beta-\lambda_\beta)\langle A^\alpha, A^\beta\rangle)dV\nonumber\\
&\ge\int_\Sigma (2|\nabla H|^2+2H^2-2\sum_{\alpha,\beta}H^\alpha (H^\beta-\lambda_\beta)|A^\alpha||A^\beta|)dV\\
&=\int_\Sigma (2|\nabla H|^2+2 H^2-2(\sum_{\alpha}H^\alpha |A^\alpha|)(\sum_{\beta}(H^\beta-\lambda_\beta) |A^\beta|))dV\nonumber\\
&\ge \int_\Sigma (2|\nabla H|^2+2 H^2-  2H|A|^2|{\bf H}-{\bf H_f}|)dV.\nonumber
\end{align}
Condition (\ref{24}) of the theorem together (\ref{25}) implies that 
\begin{equation}\label{26}
H^2-H|A|^2|{\bf H}-{\bf H_f}|=0,
\end{equation} 
and $\nabla H=0,$ i.e. $ H=\text {const.}.$  
We have two cases. 
\begin{enumerate}
\item  ${H=0}.$ We have  $|{\bf H}-{\bf H_f}|=|{\bf H_f}|=\lambda\ne 0.$  By (\ref{24}),  $|A|=0,$ i.e.  $\Sigma$ is totally geodesic and therefore
 an $n$-plane and $d(O,\Sigma)=\lambda$ (see Example \ref{pla}).
\item $H\ne 0.$ By (\ref{26}),  $|{\bf H}-{\bf H_f}|\ne0$ and $|A|\ne 0.$  
Because the equalities hold in (\ref{25}), $(H^\alpha), (H^\alpha-\lambda_\alpha)$ and $(A^\alpha)$ are proportional. It follows that
\begin{equation}\label{57}
H^\alpha=0,\ A^\alpha=0, \ \alpha\ne n+p,
\end{equation} 
\begin{equation}\label{58}
|A|^2 = \sum_{i,j}h^{n+p}_{ij}h_{i j}^{ n+p}=|A^{n+p}|^2\ne 0,
\end{equation}
\begin{equation}\label{59}
H=H^{n+p}\ne 0.
\end{equation}

Thus, ${\bf H}=H^{n+p}e_{n+p}$ and ${\bf H}-{\bf H_f}=(H^{n+p}-\lambda)e_{n+p}$ are constants and therefore $|A|$ is constant.

Now (\ref{26})  becomes
\begin{equation}\label{66}
H^{n+p}-|A|^2| H^{n+p}-\lambda|=0.
\end{equation}
From (\ref{47})
\begin{align}\label{60}
H^{n+p}_{,ij} = h^{n+p}_{ij} - (H^{n+p}-\lambda)h^{n+p}_{ik}h^{n+p}_{jk}+ \langle X, e_k\rangle h^{n+p}_{ijk}.
 \end{align}
Multiplying both sides of (\ref{60}) by $h^{n+p}_{ij} $ and summing over $i, j$ we get
\begin{align*}
\sum_{k,i,j}(H^{n+p}-\lambda) h^{n+p}_{ik} h^{n+p}_{kj}h^{n+p}_{ij} &=\sum_{k,i,j} h^{n+p}_{ijk} h^{n+p}_{ij}\langle X, e_k\rangle+ \sum_{i,j}h^{n+p}_{ij}h^{n+p}_{ij} \nonumber\\
&=\sum_k \frac 12A^{n+p}_{,k}\langle X, e_k\rangle h^{n+p}_{ij}+ \sum_{i,j}h^{n+p}_{ij}h^{n+p}_{ij} 
=|A|^2.
\end{align*}
Now we have,
\begin{align*}
\langle\Delta A, A \rangle &=\langle\Delta A^{n+p}, A^{n+p} \rangle\nonumber\\
 &= 	\sum_{i,j} { H}^{n+p}_{,ij} h_{ij}^{n+p}+  \sum_{i,j}H^{n+p}h^{n+p}_{ik} h^{n+p}_{il} h^{n+p}_{kl}-|[A^{n+p}, A^{n+p}]|^2-S^2_{(n+p)( n+p)}\\
 &=0+|A|^4-0-|A|^4=0.
\end{align*}
Therefore,
\begin{align}\label{33}
0&=\Delta|A|^2 = 2 |\nabla A|^2 + \langle\Delta A, A \rangle\nonumber\\
&=2 |\nabla A|^2.
\end{align}
 Because of (\ref{57}), by Theorem 1 of Yau in \cite{ya}, $\Sigma$ lies some
$(n + 1)$-dimensional linear subspace $\mathbb R^{n+1}.$  From (\ref{33}), $\nabla A \equiv 0$ and by  Theorem 4 of Lawson in \cite{la}, $\Sigma$ (up to isometries) must be  $S^k(r) \times \mathbb R^{n-k}, k=0,1,\ldots, n$. Furthermore, the  $\lambda$-submanifolds  equation (\ref{9}) implies that the $k$-dimensional sphere $S^k(r)$ should have the radius $r=\frac {\pm \lambda+\sqrt{\lambda^2+4k}}2$ (see Example \ref{sph1} and \ref{cyl}).
\end{enumerate}
 \end{proof}
\begin{rem}
\begin{enumerate}
\item When $\lambda=0,$ Condition (\ref{24}) becomes  
$$ A^2H\le{H},$$
the theorem was proved in \cite {cali}, Theorem 1.1 (see also Proposition 3.1 in \cite{dixi1}). 
\item When $p=1, $ the theorem  generalizes Theorem 1.2 in \cite{wepe} for the case $\lambda$-hypersurfaces. (see also  Theorem 1.2 in \cite{anmi}). 
\end{enumerate}
\end{rem}

 We have known that (Lemma \ref{lem15})
\begin{equation}\label{37}
\Delta_f|A|^2 = 2|\nabla A|^2 +2|A|^2 +2\langle\langle{\bf H_f}, A_{ik}\rangle A_{jk}, A_{ij}\rangle - 2 \sum_{\alpha\ne\beta}|[A^{e_\alpha}, A^{e_\beta}]|2 - 2 \sum_{\alpha,\beta}S^2_{\alpha\beta}
\end{equation}
  In codimension 1 case,  $ \sum_{\alpha\ne\beta}|[A^{e_\alpha}, A^{e_\beta}]|^2=0,$  $\sum_{\alpha,\beta}S^2_{\alpha\beta}=|A|^4$ and (\ref{37}) becomes (see Lemma 2.1 in \cite{gu}).
$$  \Delta_f|A|^2 = 2|\nabla A|^2 +2|A|^2 -2\lambda\langle A^2, A\rangle - 2 |A|^4.$$

We have
\begin{align}\label{38}
\langle{\bf H_f}, A_{ik}\rangle\langle A_{jk}, A_{ij}\rangle&=\lambda_\alpha A^\alpha_{ik} A^\beta_{jk} A^\beta_{ij}\nonumber\\
&= \lambda_\alpha A^\alpha_{ik} [A^\beta]^2_{ik}\nonumber\\
&\le [\sum_{\alpha, i,k}\lambda^2_\alpha (A^\alpha_{ik})^2]^{1/2} |A^2|\\
&\le |\lambda| |A| |A^2|= \lambda|A|^3. \nonumber
\end{align}
In general, by Lemma 5.3.1 in \cite{si}
\begin{equation}\label{39}
 \sum_{\alpha\ne\beta}|[A^{\alpha}, A^{\beta}]|^2+\sum_{\alpha,\beta}S^2_{\alpha\beta}\le (2-\frac 1p)|A|^4.
\end{equation}
Combining (\ref{37}), (\ref{38}), (\ref{39}), we have
\begin{equation}\label{40}
\Delta_f|A|^2 \ge 2|\nabla A|^2 + 2|A|^2 -2\lambda |A|^3- 2\left(2-\frac 1p\right)|A|^4.
\end{equation} 

 This is Simons' type inequality for  $\lambda$-submanifolds. When the codimension $m = 1,$ the above Simons type inequality for  $\lambda$-hypersurfaces   is much more simpler
$$
\Delta_f|A|^2 \ge 2|\nabla A|^2 + 2|A|^2 \left(1 - \lambda|A|-|A|^2\right).
$$ 
If 
\begin{equation}
|A| \le\frac{-\lambda+\sqrt{\lambda^2 + 4}}2,
\end{equation}
then $1 - \lambda|A|-|A|^2\ge 0.$
Therefore, apply Theorem \ref{basis} to $|A|^2$ we obtain 
\begin{equation}\label{aaa}
|\nabla A|^2=0,
\end{equation}
and 
\begin{equation}\label{bbb}
|A|^2(1 - \lambda|A|-|A|^2)=0.
\end{equation}
Because of (\ref{aaa}), by Theorem 4 of Lawson \cite{la}, Guang (Theorem 1.3, {\cite{gu}) claims that $\Sigma$ must be a product of a sphere and a linear space, i.e  either a round sphere $S^n,$ or 
a cylinder $S^k\times\mathbb R^{n-k}$ for $1 \le k \le n - 1,$ or  a hyperplane in $R^{n+1}.$

But because of (\ref{bbb}), $|A|^2=0,$ i.e $\Sigma$ is a hyperplane,  or $1 - \lambda|A|-|A|^2=0,$ i,e. $
|A| =\frac{-\lambda+\sqrt{\lambda^2 + 4}}2.$ In the later case, $\Sigma$ must be a cylinder $S^1(r) \times \mathbb R^{n-1},$ where $r=\frac {\lambda+\sqrt{\lambda^2 + 4}}2$  (see Examples \ref{sph} and \ref{cyl}). 

Thus, we can restate Theorem 1.3 in \cite{gu}, in a more precise form, as follows
 \begin{theorem} If $\Sigma^n\subset\mathbb R^{n+1}$ is a smooth complete embedded $\lambda$-hypersurfaces with polynomial volume growth, which satisfies
\begin{equation}|A| \le\frac{-\lambda+\sqrt{\lambda^2 + 4}}2,
\end{equation}
then $\Sigma$ is one of the following:
\begin{enumerate}
\item a hyperplane,
\item a cylinder $S^1(r) \times \mathbb R^{n-1},$ where $r=\frac {\lambda+\sqrt{\lambda^2 + 4}}2.$
  \end{enumerate}
 \end{theorem}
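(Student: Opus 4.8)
The plan is to run the codimension-one specialization of the Simons-type machinery assembled above and then to exploit \emph{both} consequences of the divergence identity, rather than only the vanishing of $\nabla A$. First I would record the codimension-one Simons inequality: setting $p=1$ in (\ref{40}) (equivalently, specializing (\ref{37}), where the single normal direction kills every commutator $[A^{e_\alpha},A^{e_\beta}]$ and reduces $\sum_{\alpha,\beta}S^2_{\alpha\beta}$ to $|A|^4$, while $\langle{\bf H_f},A_{ik}\rangle\langle A_{jk},A_{ij}\rangle$ is controlled by $\lambda|A|^3$ via Cauchy--Schwarz), one gets
$$
\Delta_f|A|^2 \ge 2|\nabla A|^2 + 2|A|^2\left(1-\lambda|A|-|A|^2\right).
$$
I would then note that $g(t)=1-\lambda t-t^2$ is decreasing for $t\ge 0$ and vanishes exactly at $t_0=\frac{-\lambda+\sqrt{\lambda^2+4}}2$, so the hypothesis $|A|\le t_0$ gives $g(|A|)\ge 0$ and hence $\Delta_f|A|^2\ge 0$ pointwise.

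Next, since $|A|$ is bounded, the function $u=|A|^2$ and its Laplacian satisfy the polynomial growth hypothesis of Theorem \ref{basis}; applying that theorem yields $\int_\Sigma e^{|X|^2/2}\Delta_f|A|^2\,dV=0$. A nonnegative integrand with vanishing weighted integral must vanish identically, which forces simultaneously
$$
|\nabla A|\equiv 0 \qquad\text{and}\qquad |A|^2\bigl(1-\lambda|A|-|A|^2\bigr)\equiv 0,
$$
that is, exactly (\ref{aaa}) and (\ref{bbb}).

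From $\nabla A\equiv 0$ the second fundamental form is parallel, so by Lawson's classification (Theorem~4 of \cite{la}) $\Sigma$ is, up to a rigid motion, a hyperplane, a round sphere $S^n(r)$, or a cylinder $S^k(r)\times\mathbb R^{n-k}$ with $1\le k\le n-1$. The second identity then splits into the dichotomy $|A|\equiv 0$ or $|A|\equiv t_0$: the first case gives the hyperplane, while in the second $|A|$ is the constant $t_0$.

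The step I expect to carry the real content is pinning down \emph{which} product occurs when $|A|\equiv t_0$, since the argument behind \cite{gu} stops at ``some product.'' Here I would feed the constant value back into the $\lambda$-hypersurface equation. For a cylinder $S^k(r)\times\mathbb R^{n-k}$ one has $|A|^2=k/r^2$ and, from Example~\ref{cyl}, the relation $|r-k/r|=\lambda$. On the branch $r^2-\lambda r=k$ this gives $|A|^2=1-\lambda/r$, and imposing $|A|^2=t_0^2=1-\lambda t_0$ forces $r=1/t_0=\frac{\lambda+\sqrt{\lambda^2+4}}2$, whence $k=r(r-\lambda)=r\,t_0=1$; the other branch $r^2+\lambda r=k$ yields $|A|^2=1+\lambda/r>1>t_0^2$ and is excluded. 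The full-sphere case $k=n$ is the identical computation and survives only when $n=1$, where it coincides with the cylinder. Thus the only possibilities are the hyperplane and $S^1(r)\times\mathbb R^{n-1}$ with $r=\frac{\lambda+\sqrt{\lambda^2+4}}2$, as claimed.
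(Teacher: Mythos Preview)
Your proposal is correct and follows essentially the same route as the paper: specialize the Simons-type identity (\ref{37}) to codimension one, apply Theorem~\ref{basis} to obtain both $\nabla A\equiv 0$ and the algebraic constraint (\ref{bbb}), invoke Lawson's classification, and then use (\ref{bbb}) to reduce to the hyperplane or to the single cylinder $S^1(r)\times\mathbb R^{n-1}$. Your explicit computation that only $k=1$ survives when $|A|=t_0$ is in fact more detailed than the paper, which at that point simply refers the reader to Examples~\ref{sph} and~\ref{cyl}.
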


When the codimension $p \ge 2,$ inequality (\ref{39}) is refined  as follows (Theorem 1 in \cite{lili})
\begin{equation}\label{42}
 \sum_{\alpha\ne\beta}|[A^{\alpha}, A^{\beta}]|^2+\sum_{\alpha,\beta}S^2_{\alpha\beta}\le \frac 32|A|^4.
\end{equation}

In this case, we get
\begin{equation}
\Delta_f|A|^2 \ge 2|\nabla A|^2 + 2|A|^2 -2\lambda |A|^3- 3|A|^4,
\end{equation} 
and obtain the following.

  \begin{theorem}  If 
\begin{equation}|A| \le\frac{-\lambda+\sqrt{\lambda^2 + 6}}3,
\end{equation}
then $\Sigma$ is  an $n$-plane.
 \end{theorem}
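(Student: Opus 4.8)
The plan is to feed the refined Simons-type inequality established just above the statement,
$$\Delta_f|A|^2 \ge 2|\nabla A|^2 + 2|A|^2\left(1 - \lambda|A| - \tfrac{3}{2}|A|^2\right),$$
into the divergence type theorem. The first observation is that the threshold $\frac{-\lambda+\sqrt{\lambda^2+6}}{3}$ is precisely the positive root of $\frac{3}{2}t^2 + \lambda t - 1 = 0$, so the hypothesis on $|A|$ guarantees $1 - \lambda|A| - \frac{3}{2}|A|^2 \ge 0$, and hence $\Delta_f|A|^2 \ge 0$ everywhere on $\Sigma$.

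Next I would apply Theorem \ref{basis} to the function $u = |A|^2$. Since $|A|$ is bounded by a constant, the entries $h_{ij}^\alpha$ are bounded, and one checks that $|A|^2$ satisfies the polynomial-growth hypothesis, exactly as $H^2$ did in the previous theorem. This yields $\int_\Sigma \Delta_f|A|^2\, e^{-|X|^2/2}\,dV = 0$. A nonnegative integrand against a positive weight integrating to zero forces $\Delta_f|A|^2 \equiv 0$; comparing with the inequality above then squeezes both nonnegative summands to zero, so $\nabla A \equiv 0$ and $|A|^2\left(1 - \lambda|A| - \frac{3}{2}|A|^2\right) \equiv 0$ pointwise.

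Then I would split into two cases. If $|A|\equiv 0$, then $\Sigma$ is totally geodesic, hence an affine $n$-plane, and Example \ref{pla} identifies it as the asserted $\lambda$-submanifold, finishing the proof. If instead $|A|$ equals the nonzero constant $c = \frac{-\lambda+\sqrt{\lambda^2+6}}{3}$, I would trace the equality back through the exact identity (\ref{37}): since $\Delta_f|A|^2 = 0$ while the estimates (\ref{38}) and (\ref{42}) were the only sources of inequality, each of them must hold with equality. Equality in (\ref{38}) forces $|A^{n+p}| = |A|$, that is $A^\alpha = 0$ for all $\alpha \ne n+p$; but then the left-hand side of (\ref{42}) collapses to $\sum_{\alpha,\beta}S^2_{\alpha\beta} = |A|^4$, whereas equality in (\ref{42}) demands the value $\frac{3}{2}|A|^4$. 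This is impossible for $|A| = c \ne 0$, so the second case is vacuous and $\Sigma$ is forced to be an $n$-plane.

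The main obstacle is the equality analysis in the last step: one must pin down exactly when the Cauchy--Schwarz and norm estimates in (\ref{38}) are sharp (which collapses the configuration to an effectively codimension-one second fundamental form) and when the Li--Li inequality (\ref{42}) is sharp, and then verify these two sharpness conditions are mutually incompatible unless $|A| = 0$. This incompatibility is precisely what removes the spherical and cylindrical possibilities that survive in the codimension-one case, and it is the crux of why the conclusion here is an $n$-plane alone. A secondary technical point, which I would treat as in the earlier theorem, is to confirm rigorously that $u = |A|^2$ meets the growth requirement of Theorem \ref{basis}, the boundedness of $|A|$ being the key input.
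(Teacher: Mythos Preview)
Your argument is correct and follows the same route as the paper: apply the divergence theorem (Theorem~\ref{basis}) to $|A|^2$ using the Simons-type inequality with the Li--Li bound~(\ref{42}), then analyze the equality cases of~(\ref{38}) and~(\ref{42}). The paper's own proof is terser---it asserts directly that equality in~(\ref{38}) and~(\ref{42}) forces all $A^\alpha=0$---whereas you spell out the mechanism (equality in~(\ref{38}) collapses the second fundamental form to the $e_{n+p}$ direction, which is then incompatible with equality in~(\ref{42}) unless $|A|=0$); this extra detail is helpful and clarifies why the spherical and cylindrical alternatives disappear for $p\ge 2$.
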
 
  
  \begin{proof}
Apply Theorem \ref{basis} to $|A|^2$ we imply that
  \begin{equation}|A|=\frac{-\lambda+\sqrt{\lambda^2 + 6}}3,
\end{equation}
and all inequalities in (\ref{38}) and (\ref{42}) become equalities. It implies that 
all of the matrices $A^1, A^2 \ldots,  A^p$ are  zero, i.e. $\Sigma$ is an $n$-plane.
 \end{proof}


\end{document}